\begin{document}
\title{Boundedness of intrinsic square functions on the weighted weak Hardy spaces}
\author{Hua Wang \footnote{E-mail address: wanghua@pku.edu.cn.}\\
\footnotesize{Department of Mathematics, Zhejiang University, Hangzhou 310027, China}}
\date{}
\maketitle

\begin{abstract}
In this paper, by using the atomic decomposition theorem for weighted weak Hardy spaces, we will show the boundedness properties of intrinsic square functions including the Lusin area integral, Littlewood-Paley $g$-function and $g^*_\lambda$-function on these spaces.\\
MSC(2010): 42B25; 42B30\\
Keywords: Intrinsic square functions; weighted weak Hardy spaces; $A_p$ weights; atomic decomposition
\end{abstract}

\section{Introduction}

Let ${\mathbb R}^{n+1}_+=\mathbb R^n\times(0,\infty)$ and $\varphi_t(x)=t^{-n}\varphi(x/t)$. The classical square function (Lusin area integral) is a familiar object. If $u(x,t)=P_t*f(x)$ is the Poisson integral of $f$, where $P_t(x)=c_n\frac{t}{(t^2+|x|^2)^{{(n+1)}/2}}$ denotes the Poisson kernel in ${\mathbb R}^{n+1}_+$. Then we define the classical square function (Lusin area integral) $S(f)$ by
\begin{equation*}
S(f)(x)=\bigg(\iint_{\Gamma(x)}\big|\nabla u(y,t)\big|^2t^{1-n}\,dydt\bigg)^{1/2},
\end{equation*}
where $\Gamma(x)$ denotes the usual cone of aperture one:
\begin{equation*}
\Gamma(x)=\big\{(y,t)\in{\mathbb R}^{n+1}_+:|x-y|<t\big\}
\end{equation*}
and
\begin{equation*}
\big|\nabla u(y,t)\big|=\left|\frac{\partial u}{\partial t}\right|^2+\sum_{j=1}^n\left|\frac{\partial u}{\partial y_j}\right|^2.
\end{equation*}
We can similarly define a cone of aperture $\beta$ for any $\beta>0$:
\begin{equation*}
\Gamma_\beta(x)=\big\{(y,t)\in{\mathbb R}^{n+1}_+:|x-y|<\beta t\big\},
\end{equation*}
and corresponding square function
\begin{equation*}
S_\beta(f)(x)=\bigg(\iint_{\Gamma_\beta(x)}\big|\nabla u(y,t)\big|^2t^{1-n}\,dydt\bigg)^{1/2}.
\end{equation*}
The Littlewood-Paley $g$-function (could be viewed as a ``zero-aperture" version of $S(f)$) and the $g^*_\lambda$-function (could be viewed as an ``infinite aperture" version of $S(f)$) are defined respectively by
\begin{equation*}
g(f)(x)=\bigg(\int_0^\infty\big|\nabla u(x,t)\big|^2 t\,dt\bigg)^{1/2}
\end{equation*}
and
\begin{equation*}
g^*_\lambda(f)(x)=\left(\iint_{{\mathbb R}^{n+1}_+}\bigg(\frac t{t+|x-y|}\bigg)^{\lambda n}\big|\nabla u(y,t)\big|^2 t^{1-n}\,dydt\right)^{1/2}.
\end{equation*}

The modern (real-variable) variant of $S_\beta(f)$ can be defined in the following way. Let $\psi\in C^\infty(\mathbb R^n)$ be real, radial, have support contained in $\{x:|x|\le1\}$, and $\int_{\mathbb R^n}\psi(x)\,dx=0$. The continuous square function $S_{\psi,\beta}(f)$ is defined by
\begin{equation*}
S_{\psi,\beta}(f)(x)=\bigg(\iint_{\Gamma_\beta(x)}\big|f*\psi_t(y)\big|^2\frac{dydt}{t^{n+1}}\bigg)^{1/2}.
\end{equation*}

In 2007, Wilson \cite{wilson1} introduced a new square function called intrinsic square function which is universal in a sense (see also \cite{wilson2}). This function is independent of any particular kernel $\psi$, and it dominates pointwise all the above defined square functions. On the other hand, it is not essentially larger than any particular $S_{\psi,\beta}(f)$. For $0<\alpha\le1$, let ${\mathcal C}_\alpha$ be the family of functions $\varphi$ defined on $\mathbb R^n$ such that $\varphi$ has support containing in $\{x\in\mathbb R^n: |x|\le1\}$, $\int_{\mathbb R^n}\varphi(x)\,dx=0$, and, for all $x, x'\in \mathbb R^n$,
\begin{equation*}
|\varphi(x)-\varphi(x')|\le|x-x'|^\alpha.
\end{equation*}
For $(y,t)\in {\mathbb R}^{n+1}_+$ and $f\in L^1_{{loc}}(\mathbb R^n)$, we set
\begin{equation*}
A_\alpha(f)(y,t)=\sup_{\varphi\in{\mathcal C}_\alpha}\big|f*\varphi_t(y)\big|.
\end{equation*}
Then we define the intrinsic square function of $f$ (of order $\alpha$) by the formula
\begin{equation*}
\mathcal S_\alpha(f)(x)=\left(\iint_{\Gamma(x)}\Big(A_\alpha(f)(y,t)\Big)^2\frac{dydt}{t^{n+1}}\right)^{1/2}.
\end{equation*}
We can also define varying-aperture versions of $\mathcal S_\alpha(f)$ by the formula
\begin{equation*}
\mathcal S_{\alpha,\beta}(f)(x)=\left(\iint_{\Gamma_\beta(x)}\Big(A_\alpha(f)(y,t)\Big)^2\frac{dydt}{t^{n+1}}\right)^{1/2}.
\end{equation*}
The intrinsic Littlewood-Paley $g$-function and the intrinsic $g^*_\lambda$-function will be defined respectively by
\begin{equation*}
g_\alpha(f)(x)=\left(\int_0^\infty\Big(A_\alpha(f)(x,t)\Big)^2\frac{dt}{t}\right)^{1/2}
\end{equation*}
and
\begin{equation*}
g^*_{\lambda,\alpha}(f)(x)=\left(\iint_{{\mathbb R}^{n+1}_+}\left(\frac t{t+|x-y|}\right)^{\lambda n}\Big(A_\alpha(f)(y,t)\Big)^2\frac{dydt}{t^{n+1}}\right)^{1/2}.
\end{equation*}

In \cite{wilson2}, Wilson proved the following result.

\newtheorem*{thma}{Theorem A}
\begin{thma}
Let $w\in A_p(\mbox{Muckenhoupt weight class})$, $1<p<\infty$ and $0<\alpha\le1$. Then there exists a constant $C>0$ independent of $f$ such that
$$\|\mathcal S_\alpha(f)\|_{L^p_w}\le C \|f\|_{L^p_w}.$$
\end{thma}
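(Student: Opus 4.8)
The plan is to recognize $\mathcal S_\alpha$ as the norm of a Banach-space-valued convolution operator and then invoke the weighted theory of vector-valued Calder\'on--Zygmund operators. Concretely, let $\Gamma=\Gamma(0)=\{(z,t)\in\mathbb R^{n+1}_+:|z|<t\}$ and let $\mathbb B$ be the Banach space of functions $F(\varphi,z,t)$ on $\mathcal C_\alpha\times\Gamma$ (the supremum below being effectively countable by separability of $\mathcal C_\alpha$, hence measurable) equipped with the norm
$$\|F\|_{\mathbb B}=\left(\iint_{\Gamma}\Big(\sup_{\varphi\in\mathcal C_\alpha}|F(\varphi,z,t)|\Big)^2\frac{dz\,dt}{t^{n+1}}\right)^{1/2}.$$
Define the $\mathbb B$-valued operator by $Uf(x)(\varphi,z,t)=f*\varphi_t(x-z)$. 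The change of variables $y=x-z$ shows that $\mathcal S_\alpha(f)(x)=\|Uf(x)\|_{\mathbb B}$, and $U$ is a convolution operator $Uf=f*\mathbf K$ with the $\mathbb B$-valued kernel $\mathbf K(v)(\varphi,z,t)=\varphi_t(v-z)$. Consequently $\|\mathcal S_\alpha f\|_{L^p_w}=\|Uf\|_{L^p_{\mathbb B}(w)}$, and the desired bound will follow once $U$ is shown to be a $\mathbb B$-valued Calder\'on--Zygmund operator.

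First I would establish the unweighted bound $U:L^2\to L^2_{\mathbb B}$, i.e.\ $\|\mathcal S_\alpha f\|_{2}\le C\|f\|_2$. By Fubini, $\|\mathcal S_\alpha f\|_2^2=c_n\iint_{\mathbb R^{n+1}_+}\big(A_\alpha(f)(y,t)\big)^2\frac{dy\,dt}{t}$, so it suffices to control the square integral of the supremum $A_\alpha(f)(y,t)=\sup_{\varphi\in\mathcal C_\alpha}|f*\varphi_t(y)|$. For a single $\varphi\in\mathcal C_\alpha$ this is immediate from Plancherel, using that $\int\varphi=0$ together with the Lipschitz-$\alpha$ condition forces $|\widehat\varphi(\xi)|\le C\min(|\xi|,|\xi|^{-\alpha})$ uniformly over $\mathcal C_\alpha$. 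The genuine difficulty is the supremum over the infinite family $\mathcal C_\alpha$, which I would handle by exploiting the uniform decay and equicontinuity of $\{\widehat\varphi:\varphi\in\mathcal C_\alpha\}$ to reduce, via an almost-orthogonality and discretization argument, to a fixed Littlewood--Paley square function. This $L^2$ step is the main obstacle of the whole argument.

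Next I would verify the $\mathbb B$-valued H\"ormander condition $\int_{|v|>2|h|}\|\mathbf K(v-h)-\mathbf K(v)\|_{\mathbb B}\,dv\le C$. Writing $\varphi_t=t^{-n}\varphi(\cdot/t)$, the Lipschitz bound $|\varphi(x)-\varphi(x')|\le|x-x'|^\alpha$ gives the pointwise estimate $|\varphi_t(v-z-h)-\varphi_t(v-z)|\le t^{-n}(|h|/t)^\alpha$, while the support condition $\mathrm{supp}\,\varphi\subset\{|x|\le1\}$ forces the difference to vanish unless $t\gtrsim|v|$ (since $|z|<t$ and $|v|>2|h|$). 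Integrating over the cone yields $\|\mathbf K(v-h)-\mathbf K(v)\|_{\mathbb B}\lesssim|h|^\alpha|v|^{-n-\alpha}$, and $\int_{|v|>2|h|}|h|^\alpha|v|^{-n-\alpha}\,dv\lesssim 1$ as required; note the cancellation $\int\varphi=0$ plays no role here, entering only through the $L^2$ estimate.

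Finally, with the $L^2$ boundedness and the $\mathbb B$-valued H\"ormander condition in hand, the standard weighted theory for Banach-space-valued Calder\'on--Zygmund operators (as in Garc\'ia-Cuerva and Rubio de Francia) gives $U:L^p_w\to L^p_{\mathbb B}(w)$ for every $1<p<\infty$ and $w\in A_p$. Since $\|Uf\|_{L^p_{\mathbb B}(w)}^p=\int_{\mathbb R^n}\mathcal S_\alpha f(x)^p w(x)\,dx$, this recovers $\|\mathcal S_\alpha f\|_{L^p_w}\le C\|f\|_{L^p_w}$, which is Theorem A. I expect all steps except the $L^2$ estimate to be routine; the control of the supremum over $\mathcal C_\alpha$ in $L^2$ is where the essential work lies.
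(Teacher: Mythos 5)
First, note that the paper does not prove Theorem A at all: it is quoted verbatim from Wilson's monograph \cite{wilson2}, so there is no internal proof to measure your argument against. Judged on its own terms, your reduction to a Banach-valued Calder\'on--Zygmund operator is a legitimate and well-trodden route for square functions, and the kernel estimates you claim are correct: the support condition forces $t\gtrsim|v|$ on the cone, the H\"older condition on $\varphi$ gives $|\varphi_t(u)-\varphi_t(u')|\le t^{-n-\alpha}|u-u'|^\alpha$ uniformly over $\mathcal C_\alpha$, and integrating $t^{-2n-2\alpha}|h|^{2\alpha}\cdot t^n\,\frac{dt}{t^{n+1}}$ over $t\gtrsim |v|$ does yield $\|\mathbf K(v-h)-\mathbf K(v)\|_{\mathbb B}\lesssim |h|^\alpha|v|^{-n-\alpha}$, which is the standard (not merely H\"ormander) regularity needed for the $A_p$-weighted theory. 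This computation is, incidentally, the same one the paper carries out in scalar form in its Lemma 3.1 for atoms.

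The genuine gap is the $L^2$ estimate, and it is not a minor one: it is the entire analytic content of Theorem A, since everything downstream is routine vector-valued Calder\'on--Zygmund machinery (as you yourself observe). What you offer for it is a plan, not an argument. Plancherel handles a single $\varphi$, but the supremum over $\mathcal C_\alpha$ does not reduce to that case by ``discretization'': although $\mathcal C_\alpha$ is compact in $C(\overline{B(0,1)})$ by Arzel\`a--Ascoli and hence the sup may be taken over a countable dense family (so measurability is fine), a supremum of countably many individually $L^2$-bounded square functions need not be $L^2$-bounded, and no fixed Littlewood--Paley square function dominates $A_\alpha(f)(y,t)$ pointwise. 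The known proofs work harder here: Wilson controls $A_\alpha(f)(y,t)$ by a rapidly convergent sum of convolutions against a fixed family of dilated kernels (exploiting that near-optimal $\varphi^{y,t}$ can be resummed with geometric decay), or alternatively one runs a genuine almost-orthogonality/Cotlar argument on the bilinear form $\iint f*\varphi^{y,t}_t(y)\,\overline{g(y,t)}\,\frac{dy\,dt}{t}$ using both the cancellation and the uniform H\"older continuity of the class. Either of these can be carried out, so your architecture is salvageable, but as written the proposal assumes the one step that constitutes the theorem.
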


Moreover, in \cite{lerner}, Lerner showed sharp $L^p_w$ norm inequalities for the intrinsic square functions in terms of the $A_p$ characteristic constant of $w$ for all $1<p<\infty$. In \cite{huang}, Huang and Liu studied the boundedness of intrinsic square functions on the weighted Hardy spaces $H^1_w(\mathbb R^n)$. Furthermore, they obtained the intrinsic square function characterizations of $H^1_w(\mathbb R^n)$. Recently, in \cite{wang1} and \cite{wang2}, we have established the strong and weak type estimates of intrinsic square functions on the weighted Hardy spaces $H^p_w(\mathbb R^n)$ for $n/{(n+\alpha)}\le p<1$.

The main purpose of this paper is to investigate the mapping properties of intrinsic square functions on the weighted weak Hardy spaces $WH^p_w(\mathbb R^n)$ (see Section 2 for the definition). We now present our main results as follows.

\newtheorem{theorem}{Theorem}[section]
\begin{theorem}
Let $0<\alpha\le1$, $n/{(n+\alpha)}<p\le1$ and $w\in A_{p(1+\frac{\alpha}{n})}$. Then there exists a
constant $C>0$ independent of $f$ such that
\begin{equation*}
\big\|\mathcal S_\alpha(f)\big\|_{WL^p_w}\le C\|f\|_{WH^p_w}.
\end{equation*}
\end{theorem}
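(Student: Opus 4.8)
The plan is to reduce the weak-type bound to a single-atom estimate via the atomic decomposition of $WH^p_w$ and then to sum over atoms through a level-dependent splitting. Fix $f\in WH^p_w$ and use the atomic decomposition theorem of Section 2 to write $f=\sum_{k\in\mathbb Z}\sum_i\lambda_{k,i}a_{k,i}$, where each $a_{k,i}$ is a $w$-atom supported on a cube $Q_{k,i}=Q(x_{k,i},r_{k,i})$ with $\|a_{k,i}\|_{L^\infty}\le w(Q_{k,i})^{-1/p}$ and $\int a_{k,i}=0$, the coefficients satisfy $\lambda_{k,i}\approx 2^{k}w(Q_{k,i})^{1/p}$, and the weak norm is controlled by $\sup_{k}2^{kp}\sum_i w(Q_{k,i})\le C\|f\|_{WH^p_w}^p$. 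Since $\|\mathcal S_\alpha(f)\|_{WL^p_w}^p=\sup_{\sigma>0}\sigma^p\,w(\{x:\mathcal S_\alpha(f)(x)>\sigma\})$, it suffices to prove $w(\{\mathcal S_\alpha(f)>\sigma\})\le C\sigma^{-p}\|f\|_{WH^p_w}^p$ uniformly in $\sigma$. Given $\sigma$, I would choose $k_0\in\mathbb Z$ with $2^{k_0}\le\sigma<2^{k_0+1}$ and split $f=F_1+F_2$, where $F_1=\sum_{k\le k_0}\sum_i\lambda_{k,i}a_{k,i}$ collects the small-coefficient atoms and $F_2=\sum_{k>k_0}\sum_i\lambda_{k,i}a_{k,i}$ the large-coefficient ones; by the sublinearity of $\mathcal S_\alpha$ it is enough to treat $\mathcal S_\alpha(F_1)$ and $\mathcal S_\alpha(F_2)$ separately.

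For $F_1$ I would rely on the $L^2_w$ theory. As $n/(n+\alpha)<p\le1$ and $0<\alpha\le1$ force $q_w:=p(1+\tfrac{\alpha}{n})\in(1,2]$, the hypothesis $w\in A_{q_w}$ yields $w\in A_2$, so Theorem A (with exponent $2$) gives $\|\mathcal S_\alpha(g)\|_{L^2_w}\le C\|g\|_{L^2_w}$. Using Chebyshev's inequality, the bounded-overlap property of $\{Q_{k,i}\}_i$ at each fixed level, and $\|a_{k,i}\|_{L^2_w}\le w(Q_{k,i})^{1/2-1/p}$, I obtain $\|\mathcal S_\alpha(F_1)\|_{L^2_w}\le C\|f\|_{WH^p_w}^{p/2}\sum_{k\le k_0}2^{k(2-p)/2}$. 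The series converges because $2-p>0$ and sums to $\lesssim 2^{k_0(2-p)/2}$, whence $w(\{\mathcal S_\alpha(F_1)>\sigma/2\})\le C\sigma^{-2}2^{k_0(2-p)}\|f\|_{WH^p_w}^p\approx C\sigma^{-p}\|f\|_{WH^p_w}^p$.

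For $F_2$ I would set $Q_{k,i}^{*}=2\sqrt n\,Q_{k,i}$, $\Omega=\bigcup_{k>k_0}\bigcup_i Q_{k,i}^{*}$, and write $w(\{\mathcal S_\alpha(F_2)>\sigma/2\})\le w(\Omega)+w(\{x\notin\Omega:\mathcal S_\alpha(F_2)(x)>\sigma/2\})$. The first piece follows from $A_{q_w}$ doubling and the weak-norm control,
\[
w(\Omega)\le C\sum_{k>k_0}\sum_i w(Q_{k,i})\le C\sum_{k>k_0}2^{-kp}\|f\|_{WH^p_w}^p\le C\,2^{-k_0p}\|f\|_{WH^p_w}^p\approx C\sigma^{-p}\|f\|_{WH^p_w}^p .
\]
The engine for the second piece is the off-support decay for the intrinsic square function of an atom, which I would derive as in \cite{wang1,wang2}: the cancellation of $a_{k,i}$ together with the uniform $\alpha$-Hölder control of the family $\mathcal C_\alpha$ gives, for $x\notin Q_{k,i}^{*}$, the bound $\mathcal S_\alpha(a_{k,i})(x)\le C\,w(Q_{k,i})^{-1/p}\,r_{k,i}^{\,n+\alpha}|x-x_{k,i}|^{-(n+\alpha)}$. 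Taking $p$-th powers and integrating against $w$ over the annuli $2^{j}Q_{k,i}$, the decay $|x-x_{k,i}|^{-(n+\alpha)p}$ is matched against the growth of $w(2^jQ_{k,i})$; here $p>n/(n+\alpha)$ guarantees $(n+\alpha)p>n$, while the exponent $q_w=p(1+\alpha/n)$ makes $nq_w=(n+\alpha)p$, so that the borderline growth is closed by the openness (self-improvement) of the Muckenhoupt classes, $w\in A_{q_w}=\bigcup_{\varepsilon>0}A_{q_w-\varepsilon}$. This yields the crucial single-atom estimate $\int_{(Q_{k,i}^{*})^{c}}\bigl(\mathcal S_\alpha(a_{k,i})(x)\bigr)^{p}w(x)\,dx\le C$.

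The \emph{main obstacle} is assembling these single-atom bounds over the infinitely many large levels $k>k_0$. The naive route---$p$-subadditivity $[\mathcal S_\alpha(F_2)]^p\le\sum_{k>k_0}\sum_i\lambda_{k,i}^p[\mathcal S_\alpha(a_{k,i})]^p$ followed by Chebyshev at exponent $p$---collapses to $C\sum_{k>k_0}2^{kp}\sum_i w(Q_{k,i})$, which diverges, since the weak-$H^p_w$ norm bounds each level by $\|f\|_{WH^p_w}^p$ but offers no decay in $k$. To recover the missing $\sigma$-decay I would not pass to $p$-subadditivity but instead exploit the geometry built into the decomposition: the cubes $\{Q_{k,i}\}_i$ arise from a Whitney/Calderón--Zygmund decomposition of the nested level sets $\Omega_k\supset\Omega_{k+1}$ of the grand maximal function, and for a fixed $x\notin\Omega$ the pointwise sum $\sum_i\lambda_{k,i}\mathcal S_\alpha(a_{k,i})(x)$ can be controlled by the decay estimate against $\operatorname{dist}(x,\Omega_k)$, turning the $k$-summation into a convergent geometric series governed by $2^{k_0}\approx\sigma$; Chebyshev then delivers $w(\{x\notin\Omega:\mathcal S_\alpha(F_2)(x)>\sigma/2\})\le C\sigma^{-p}\|f\|_{WH^p_w}^p$. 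This replacement of the lossy $p$-subadditivity by a geometry-sensitive summation is the technical heart of the argument, the remaining steps being the routine verifications indicated above.
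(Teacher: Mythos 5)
Your reduction to atoms, the level splitting at $2^{k_0}\le\sigma<2^{k_0+1}$, the $L^2_w$ treatment of $F_1$ via Theorem A and bounded overlap, the bound for $w(\Omega)$, and the single-atom decay $\mathcal S_\alpha(a_{k,i})(x)\le C\,w(Q_{k,i})^{-1/p}r_{k,i}^{\,n+\alpha}|x-x_{k,i}|^{-(n+\alpha)}$ all match the paper (Lemmas 2.1, 2.2, 3.1 there). You also correctly diagnose the central difficulty: with the \emph{fixed} dilate $Q_{k,i}^{*}=2\sqrt n\,Q_{k,i}$, the single-atom estimate $\int_{(Q_{k,i}^{*})^{c}}(\mathcal S_\alpha(a_{k,i}))^{p}w\le C$ carries no decay in $k$, so $p$-subadditivity produces the divergent sum $\sum_{k>k_0}2^{kp}\sum_i w(Q_{k,i})$. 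But your proposed repair is a genuine gap, not a routine verification. The nesting $\Omega_{k+1}\subset\Omega_k$ of the level sets only gives that $\operatorname{dist}(x,\Omega_k)$ is nondecreasing in $k$, not that it grows geometrically; and no pointwise bound of the form $\sum_i\lambda_{k,i}\mathcal S_\alpha(a_{k,i})(x)\lesssim\sigma\,\theta^{\,k-k_0}$ on $\Omega^c$ can hold, since for $x$ just outside a single $Q_{k,i_0}^{*}$ the term coming from $i_0$ alone is already of size comparable to $2^{k}\gg\sigma$. So the ``geometry-sensitive summation'' you invoke is not an argument one can actually run, and the technical heart of the proof is missing.

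The paper closes exactly this gap by keeping $p$-subadditivity but letting the excluded cubes \emph{expand with the level}: it sets $\widetilde{Q^k_i}=Q\bigl(x^k_i,\tau^{(k-k_0)/(n+\alpha)}(2\sqrt n)\,r^k_i\bigr)$ with a fixed $1<\tau<2$ and $A_{k_0}=\bigcup_{k>k_0}\bigcup_i\widetilde{Q^k_i}$. The union is still small, because Lemma 2.1 with $nq=(n+\alpha)p$ gives $w(\widetilde{Q^k_i})\le C\tau^{(k-k_0)p}w(Q^k_i)$ and hence $\sigma^pw(A_{k_0})\le C\|f\|^p_{WH^p_w}\sum_{k>k_0}(\tau/2)^{(k-k_0)p}<\infty$ (this is why $\tau<2$ is needed). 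On the complement, the tail integral now runs over $|x-x^k_i|\ge\tau^{(k-k_0)/(n+\alpha)}\sqrt n\,r^k_i$, and combining Lemma 2.2 (at the borderline exponent $nq=(n+\alpha)p$) with the openness $w\in A_{q-\varepsilon}$ yields $\int_{(\widetilde{Q^k_i})^c}|\mathcal S_\alpha(b^k_i)|^pw\le C\,2^{kp}w(Q^k_i)\bigl(\tau^{(k-k_0)/(n+\alpha)}\bigr)^{-n\varepsilon}$. The extra factor $\bigl(\tau^{(k-k_0)/(n+\alpha)}\bigr)^{-n\varepsilon}$ is precisely the geometric decay in $k$ that your fixed-dilate version lacks, and it makes the sum over $k>k_0$ converge. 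In short: the ``lossy'' $p$-subadditivity route you reject is the correct one, provided the excluded region grows like $\tau^{(k-k_0)/(n+\alpha)}$; you should replace your final paragraph by this construction.
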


\begin{theorem}
Let $0<\alpha\le1$, $n/{(n+\alpha)}<p\le1$ and $w\in A_{p(1+\frac{\alpha}{n})}$. Suppose that $\lambda>{(3n+2\alpha)}/n$, then there exists a
constant $C>0$ independent of $f$ such that
\begin{equation*}
\big\|g^*_{\lambda,\alpha}(f)\big\|_{WL^p_w}\le C\|f\|_{WH^p_w}.
\end{equation*}
\end{theorem}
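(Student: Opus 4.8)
The plan is to follow the scheme that establishes Theorem 1, replacing the control of $\mathcal S_\alpha$ on a single atom by the corresponding control of $g^*_{\lambda,\alpha}$; the extra hypothesis $\lambda>(3n+2\alpha)/n$ is precisely what lets the infinite-aperture operator recover the same spatial decay that $\mathcal S_\alpha$ enjoys. By definition of $WL^p_w$ it suffices to prove $\sigma^p\,w(\{x:g^*_{\lambda,\alpha}(f)(x)>\sigma\})\le C\|f\|_{WH^p_w}^p$ uniformly in $\sigma>0$. Invoking the atomic decomposition theorem of Section~2, I would write $f=\sum_{k\in\mathbb Z}\sum_i\lambda^k_i a^k_i$, where each $a^k_i$ is a weighted $(p,\infty)$-atom supported on a cube $Q^k_i$, the cubes at a fixed level have bounded overlap, $\lambda^k_i\approx 2^k w(Q^k_i)^{1/p}$, and $\sum_i w(Q^k_i)\le C\,2^{-kp}\|f\|_{WH^p_w}^p$ for every $k$. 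Given $\sigma$, fix $k_0$ with $2^{k_0}\le\sigma<2^{k_0+1}$ and split $f=F_1+F_2$ with $F_1=\sum_{k\le k_0}\sum_i\lambda^k_i a^k_i$ (small heights) and $F_2=\sum_{k>k_0}\sum_i\lambda^k_i a^k_i$ (large heights), so that $\{g^*_{\lambda,\alpha}(f)>\sigma\}\subset\{g^*_{\lambda,\alpha}(F_1)>\sigma/2\}\cup\{g^*_{\lambda,\alpha}(F_2)>\sigma/2\}$.

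For $F_1$ I would use a strong-type bound. Since $w\in A_{p(1+\alpha/n)}$ with $p(1+\alpha/n)>1$, the operator $g^*_{\lambda,\alpha}$ is bounded on $L^q_w$ for $q=p(1+\alpha/n)$; this follows from the known weighted $L^q$ estimates for the intrinsic square function (Theorem~A and Lerner's sharp inequalities) together with the aperture comparison that dominates $g^*_{\lambda,\alpha}$ by the family $\mathcal S_{\alpha,2^j}$ once $\lambda$ is large. Chebyshev's inequality gives $w(\{g^*_{\lambda,\alpha}(F_1)>\sigma/2\})\le C\sigma^{-q}\|F_1\|_{L^q_w}^q$, and the normalization $\|\lambda^k_i a^k_i\|_{L^q_w}\approx 2^k w(Q^k_i)^{1/q}$, combined with Minkowski's inequality and the level bound, reduces this to the geometric sum $\sum_{k\le k_0}2^{k(1-p/q)}$, which converges because $q>p$ and produces the desired $C\sigma^{-p}\|f\|_{WH^p_w}^p$.

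The heart of the argument is $F_2$. Setting $\widetilde\Omega=\bigcup_{k>k_0}\bigcup_i 2\sqrt n\,Q^k_i$, the doubling of $w$ and the level bound give $w(\widetilde\Omega)\le C\sum_{k>k_0}2^{-kp}\|f\|^p_{WH^p_w}\le C\sigma^{-p}\|f\|^p_{WH^p_w}$, so it remains to control $g^*_{\lambda,\alpha}(F_2)$ off $\widetilde\Omega$. Here I would \emph{not} use exponent $p$ directly: summing per-atom bounds at exponent $p$ over the infinitely many high levels produces no decay in $k$. Instead I apply Chebyshev at a sublinear exponent $s<p$ chosen close to $p$, so that $w(\{x\notin\widetilde\Omega:g^*_{\lambda,\alpha}(F_2)(x)>\sigma/2\})\le C\sigma^{-s}\int_{\widetilde\Omega^c}g^*_{\lambda,\alpha}(F_2)^s w$, and by sublinearity $g^*_{\lambda,\alpha}(F_2)^s\le\sum_{k>k_0}\sum_i(\lambda^k_i)^s g^*_{\lambda,\alpha}(a^k_i)^s$. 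The matter is thus reduced to a single tail integral, and the key pointwise estimate I must establish is that for an atom $a$ on $Q=Q(x_0,\ell)$ and $x\notin 2\sqrt n\,Q$,
\[
g^*_{\lambda,\alpha}(a)(x)\le C\,\|a\|_{L^1}\,\ell^{\alpha}\,|x-x_0|^{-(n+\alpha)},
\]
the same decay as for $\mathcal S_\alpha$. To prove it I would decompose ${\mathbb R}^{n+1}_+$ according to the size of $t$ relative to $R=|x-x_0|$, using that $A_\alpha(a)(y,t)\neq0$ forces $\mathrm{dist}(y,Q)\le t$ and that the cancellation $\int a=0$ and the $\alpha$-Hölder regularity of $\mathcal C_\alpha$ yield $A_\alpha(a)(y,t)\le C\|a\|_{L^1}\ell^\alpha t^{-n-\alpha}$. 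The region $t\gtrsim R$ contributes exactly as in $\mathcal S_\alpha$; the delicate region is $t\lesssim\ell$, where the factor $(t/(t+|x-y|))^{\lambda n}\approx (t/R)^{\lambda n}$ must overcome the singularity $t^{-n-\alpha}$, and the resulting radial integral $\int_0^\ell t^{\lambda n-3n-2\alpha-1}\,dt$ converges precisely when $\lambda>(3n+2\alpha)/n$. Granting this estimate, and using that $A_{p(1+\alpha/n)}$ is an open condition (so $w\in A_{p(1+\alpha/n)-\varepsilon}$ for some $\varepsilon>0$, whence $w(2^m\widetilde Q)\le C\,2^{m(p(n+\alpha)-n\varepsilon)}w(Q)$), the annular summation of $\int_{(2\sqrt n\,Q)^c}|x-x_0|^{-(n+\alpha)s}w$ converges as soon as $s>p-n\varepsilon/(n+\alpha)$; choosing $s$ in the nonempty range $(p-n\varepsilon/(n+\alpha),\,p)$ gives the per-atom bound $\int_{(2\sqrt n\,Q^k_i)^c}g^*_{\lambda,\alpha}(a^k_i)^s w\le C\,w(Q^k_i)^{1-s/p}$. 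Inserting $\lambda^k_i\approx 2^k w(Q^k_i)^{1/p}$ collapses the double sum to $\sum_{k>k_0}2^{ks}\sum_i w(Q^k_i)\le\sum_{k>k_0}2^{k(s-p)}\|f\|^p_{WH^p_w}$, which converges (since $s<p$) to $C\,2^{k_0(s-p)}\|f\|^p_{WH^p_w}$; multiplying by $\sigma^{-s}\approx 2^{-k_0 s}$ yields the required $C\sigma^{-p}\|f\|^p_{WH^p_w}$.

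I expect the main obstacle to be the pointwise tail estimate for $g^*_{\lambda,\alpha}(a)$ together with the exponent bookkeeping around it. Unlike $\mathcal S_\alpha$, the infinite-aperture weight $(t/(t+|x-y|))^{\lambda n}$ forces a genuine decomposition of the whole upper half-space into $t$-regions (equivalently into annular cones $\Gamma_{2^j}(x)\setminus\Gamma_{2^{j-1}}(x)$), and one must verify that the small-$t$ and large-aperture contributions are all dominated by the single power $|x-x_0|^{-(n+\alpha)}$; the threshold $\lambda>(3n+2\alpha)/n$ is exactly what is needed for this. The second delicate point is the simultaneous choice of the sublinear exponent $s<p$ and the openness parameter $\varepsilon$, which must be coordinated so that both the spatial tail integral and the summation over the infinitely many high levels converge.
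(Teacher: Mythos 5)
Your proposal is correct in its overall architecture and reproduces the paper's strategy: atomic decomposition, splitting $f=F_1+F_2$ at the level $k_0$ with $2^{k_0}\le\sigma<2^{k_0+1}$, a strong-type estimate for the low part, and an exceptional-set-plus-tail-decay argument for the high part, with the threshold $\lambda>(3n+2\alpha)/n$ arising from exactly the same small-$t$ (equivalently, large-aperture) integral. The differences are technical but worth recording. For $F_2$ the paper dilates $Q^k_i$ by the \emph{growing} factor $\tau^{(k-k_0)/(n+\alpha)}(2\sqrt n)$ with $1<\tau<2$ and applies Chebyshev at exponent $p$; the geometric decay in $k-k_0$ then comes from the tail integral via $w\in A_{q-\varepsilon}$. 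You instead use a fixed dilation $2\sqrt n\,Q^k_i$ and apply Chebyshev at a sublinear exponent $s<p$, extracting the decay from $2^{k(s-p)}$; both devices rest on the openness of the $A_q$ condition and are interchangeable. Likewise, your direct decomposition of $\mathbb R^{n+1}_+$ by the size of $t$ relative to $|x-x_0|$ is equivalent to the paper's route through the pointwise domination $\big(g^*_{\lambda,\alpha}\big)^2\le C\big[\mathcal S_\alpha^2+\sum_j 2^{-j\lambda n}\mathcal S_{\alpha,2^j}^2\big]$ and the per-aperture decay $\mathcal S_{\alpha,2^j}(b)(x)\le C2^{j(3n+2\alpha)/2}\|b\|_{L^\infty}r^{n+\alpha}|x-x_0|^{-(n+\alpha)}$ (Lemma 4.2); your exponent bookkeeping there checks out.

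The one soft spot is your treatment of $F_1$: you invoke boundedness of $g^*_{\lambda,\alpha}$ on $L^q_w$ for $q=p(1+\frac{\alpha}{n})\in(1,2]$, justified only by appeal to Theorem A, Lerner's inequalities, and an aperture comparison. For $1<q<2$ the aperture comparison $\|\mathcal S_{\alpha,\beta}f\|_{L^q_w}\lesssim \beta^{c}\|\mathcal S_\alpha f\|_{L^q_w}$ with a controlled exponent $c$ is not immediate (the Fubini trick that makes it one line works only at $q=2$), and whether $\lambda>(3n+2\alpha)/n$ dominates the resulting $c$ would need to be checked. This is not a fatal gap, because it is also unnecessary: since $w\in A_{p(1+\frac{\alpha}{n})}\subset A_2$, you can run the same Chebyshev argument at exponent $2$, using the $L^2_w$ bound for $g^*_{\lambda,\alpha}$ (which does follow from Fubini plus doubling, as in the paper's Lemma 4.1, under the weaker condition $\lambda>p(1+\frac{\alpha}{n})$) together with $\|F_1\|_{L^2_w}\le C\sigma^{1-p/2}\|f\|_{WH^p_w}^{p/2}$; this is exactly what the paper does. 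I would recommend making that substitution rather than trying to prove the $L^q_w$ bound for $q<2$.
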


In \cite{wilson1}, Wilson also showed that for any $0<\alpha\le1$, the functions $g_\alpha(f)(x)$ and $\mathcal S_\alpha(f)(x)$ are pointwise comparable. Thus, as a direct consequence of Theorem 1.1, we obtain the following

\newtheorem{corollary}[theorem]{Corollary}
\begin{corollary}
Let $0<\alpha\le1$, $n/{(n+\alpha)}<p\le1$ and $w\in A_{p(1+\frac{\alpha}{n})}$. Then there exists a
constant $C>0$ independent of $f$ such that
\begin{equation*}
\big\|g_\alpha(f)\big\|_{WL^p_w}\le C\|f\|_{WH^p_w}.
\end{equation*}
\end{corollary}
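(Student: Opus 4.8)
The plan is to derive the corollary directly from Theorem 1.1 together with Wilson's pointwise comparability of $g_\alpha(f)$ and $\mathcal S_\alpha(f)$, so that no new hard analysis is required. First I would invoke Wilson's result from \cite{wilson1}: for every $0<\alpha\le1$ there exist positive constants $c_1,c_2$, depending only on $n$ and $\alpha$, such that
\begin{equation*}
c_1\,\mathcal S_\alpha(f)(x)\le g_\alpha(f)(x)\le c_2\,\mathcal S_\alpha(f)(x)\qquad\text{for all }x\in\mathbb R^n.
\end{equation*}
Only the upper bound $g_\alpha(f)(x)\le c_2\,\mathcal S_\alpha(f)(x)$ is needed here.

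Next I would transfer this pointwise inequality to the level sets. For any fixed $\sigma>0$, the upper bound yields the inclusion
\begin{equation*}
\big\{x\in\mathbb R^n: g_\alpha(f)(x)>\sigma\big\}\subseteq\big\{x\in\mathbb R^n:\mathcal S_\alpha(f)(x)>\sigma/c_2\big\},
\end{equation*}
and hence, by monotonicity of the weighted measure $w$,
\begin{equation*}
w\big(\big\{x: g_\alpha(f)(x)>\sigma\big\}\big)\le w\big(\big\{x:\mathcal S_\alpha(f)(x)>\sigma/c_2\big\}\big).
\end{equation*}

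Then I would unwind the definition of the weak norm. Multiplying by $\sigma$, raising the weighted measure of the level set to the power $1/p$, and taking the supremum over $\sigma>0$, the substitution $\tau=\sigma/c_2$ gives
\begin{equation*}
\big\|g_\alpha(f)\big\|_{WL^p_w}\le c_2\,\big\|\mathcal S_\alpha(f)\big\|_{WL^p_w}.
\end{equation*}
Finally, applying Theorem 1.1 under the stated hypotheses $n/(n+\alpha)<p\le1$ and $w\in A_{p(1+\alpha/n)}$, I would bound $\|\mathcal S_\alpha(f)\|_{WL^p_w}\le C\|f\|_{WH^p_w}$, which combined with the previous display yields the claimed estimate with constant $c_2 C$.

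As for the main obstacle: there is essentially none of substance, since all the analytic difficulty is already absorbed into Theorem 1.1. The only point requiring care is the observation that the weak $L^p_w$ quasi-norm is monotone under pointwise domination and respects the rescaling $\sigma\mapsto\sigma/c_2$ of the level parameter, which is immediate from its definition as a supremum over weighted measures of level sets. The corollary therefore follows as a direct consequence, exactly as asserted in the text preceding the statement.
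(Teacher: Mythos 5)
Your proposal is correct and is exactly the argument the paper intends: the text preceding the corollary derives it as a direct consequence of Theorem 1.1 via Wilson's pointwise comparability of $g_\alpha(f)$ and $\mathcal S_\alpha(f)$, and your careful unwinding of the weak $L^p_w$ quasi-norm through the level-set inclusion is just the routine verification the paper leaves implicit. No gaps.
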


\section{Notations and preliminaries}

\subsection{$A_p$ weights}

The definition of $A_p$ class was first used by Muckenhoupt \cite{muckenhoupt}, Hunt, Muckenhoupt and Wheeden \cite{hunt}, and Coifman and Fefferman \cite{coifman} in the study of weighted
$L^p$ boundedness of Hardy-Littlewood maximal functions and singular integrals. Let $w$ be a nonnegative, locally integrable function defined on $\mathbb R^n$; all cubes are assumed to have their sides parallel to the coordinate axes.
We say that $w\in A_p$, $1<p<\infty$, if
\begin{equation*}
\left(\frac1{|Q|}\int_Q w(x)\,dx\right)\left(\frac1{|Q|}\int_Q w(x)^{-1/{(p-1)}}\,dx\right)^{p-1}\le C \quad\mbox{for every cube}\; Q\subseteq \mathbb
R^n,
\end{equation*}
where $C$ is a positive constant which is independent of the choice of $Q$.

For the case $p=1$, $w\in A_1$, if
\begin{equation*}
\frac1{|Q|}\int_Q w(x)\,dx\le C\cdot\underset{x\in Q}{\mbox{ess\,inf}}\,w(x)\quad\mbox{for every cube}\;Q\subseteq\mathbb R^n.
\end{equation*}
A weight function $w\in A_\infty$ if it satisfies the $A_p$ condition for some $1<p<\infty$. It is well known that if $w\in A_p$ with $1<p<\infty$, then $w\in A_r$ for all $r>p$, and $w\in A_q$ for some $1<q<p$. We thus write $q_w\equiv\inf\{q>1:w\in A_q\}$ to denote the critical index of $w$.

Given a cube $Q$ and $\lambda>0$, $\lambda Q$ denotes the cube with the same center as $Q$ whose side length is $\lambda$ times that of $Q$. $Q=Q(x_0,r)$ denotes the cube centered at $x_0$ with side length $r$. For a weight function $w$ and a measurable set $E$, we denote the Lebesgue measure of $E$ by $|E|$ and set the weighted measure $w(E)=\int_E w(x)\,dx$.

We give the following results that will be used in the sequel.

\newtheorem{lemma}[theorem]{Lemma}
\begin{lemma}[\cite{garcia2}]
Let $w\in A_q$ with $q\ge1$. Then, for any cube $Q$, there exists an absolute constant $C>0$ such that
$$w(2Q)\le C\,w(Q).$$
In general, for any $\lambda>1$, we have
$$w(\lambda Q)\le C\cdot\lambda^{nq}w(Q),$$
where $C$ does not depend on $Q$ nor on $\lambda$.
\end{lemma}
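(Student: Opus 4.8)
The plan is to reduce everything to the fundamental comparison inequality between Lebesgue measure and the weighted measure that is built into the $A_q$ condition, and then to apply that inequality with the dilated cube $\lambda Q$ playing the role of the ambient cube.

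First I would treat the generic case $q>1$. The starting point is that for any measurable set $E\subseteq Q$, Hölder's inequality with exponents $q$ and $q/(q-1)$ gives
\[
|E|=\int_E w^{1/q}\,w^{-1/q}\,dx\le\left(\int_E w\,dx\right)^{1/q}\left(\int_E w^{-1/(q-1)}\,dx\right)^{(q-1)/q}.
\]
Enlarging the second integral from $E$ to $Q$, and then using the $A_q$ definition to bound $\int_Q w^{-1/(q-1)}$ by a constant multiple of $|Q|^{q/(q-1)}w(Q)^{-1/(q-1)}$, I expect to arrive at the key estimate
\[
\left(\frac{|E|}{|Q|}\right)^q\le C\,\frac{w(E)}{w(Q)}\qquad\text{for every measurable }E\subseteq Q,
\]
with $C$ depending only on the $A_q$ characteristic of $w$. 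Extracting this inequality is really the only nontrivial step, and I expect it to be the main obstacle, since one must carefully track how the $A_q$ constant is inverted and raised to the power $(q-1)/q$ without introducing any dependence on $Q$.

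Next I would apply this estimate with the \emph{dilated} cube $\lambda Q$ in the role of the ambient cube and the original cube $Q\subseteq\lambda Q$ in the role of $E$. Since $|Q|/|\lambda Q|=\lambda^{-n}$, the comparison inequality immediately yields $\lambda^{-nq}\le C\,w(Q)/w(\lambda Q)$, that is, $w(\lambda Q)\le C\lambda^{nq}w(Q)$, which is the general statement. Specializing to $\lambda=2$ then gives $w(2Q)\le C\,2^{nq}w(Q)$ with a constant independent of $Q$, establishing the first assertion.

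Finally I would dispose of the borderline case $q=1$ directly from the $A_1$ condition, since the Hölder argument degenerates there. Using $w(\lambda Q)\le C|\lambda Q|\,\operatorname*{ess\,inf}_{\lambda Q}w$ together with the monotonicity $\operatorname*{ess\,inf}_{\lambda Q}w\le\operatorname*{ess\,inf}_{Q}w\le|Q|^{-1}w(Q)$, which holds because $Q\subseteq\lambda Q$, I would obtain $w(\lambda Q)\le C\lambda^n|Q|\cdot|Q|^{-1}w(Q)=C\lambda^n w(Q)$. This is exactly the claimed bound with $q=1$, so both cases together yield $w(\lambda Q)\le C\lambda^{nq}w(Q)$ with $C$ depending neither on $Q$ nor on $\lambda$.
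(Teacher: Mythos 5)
Your proof is correct and complete: the Hölder computation does yield $\bigl(|E|/|Q|\bigr)^q\le C\,w(E)/w(Q)$ with $C$ the $A_q$ characteristic, applying it with $\lambda Q$ as the ambient cube gives the doubling bound with constant independent of $Q$ and $\lambda$, and your separate treatment of $q=1$ via the essential infimum is the right way to handle the degenerate exponent. The paper itself offers no proof of this lemma (it is quoted from Garcia-Cuerva and Rubio de Francia), and your argument is precisely the standard one found there, so there is nothing to contrast.
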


\begin{lemma}[\cite{garcia2}]
Let $w\in A_q$ with $q>1$. Then, for all $r>0$, there exists a constant $C>0$ independent of $r$ such that
\begin{equation*}
\int_{|x|\ge r}\frac{w(x)}{|x|^{nq}}\,dx\le C\cdot r^{-nq}w\big(Q(0,2r)\big).
\end{equation*}
\end{lemma}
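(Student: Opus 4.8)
The plan is to handle the integral by a dyadic annular decomposition of the region $\{|x|\ge r\}$, controlling the weighted measure of each annulus by that of the central cube $Q(0,2r)$ through the doubling estimate of Lemma 2.1. The one genuinely technical point is that the \emph{naive} doubling exponent coming from $w\in A_q$ is exactly $nq$, which is too large to yield a convergent series against the kernel $|x|^{-nq}$; I would get around this by first upgrading the membership $w\in A_q$ to $w\in A_{q_1}$ for some $1<q_1<q$, using the self-improvement (openness) of the Muckenhoupt classes recorded in Section 2.1, which applies precisely because $q>1$.

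First I would write $\{x:|x|\ge r\}=\bigcup_{k=0}^\infty R_k$, where $R_k=\{x:2^kr\le|x|<2^{k+1}r\}$, and observe that on $R_k$ one has $|x|^{-nq}\le(2^kr)^{-nq}$, so that $\int_{R_k}w(x)|x|^{-nq}\,dx\le(2^kr)^{-nq}w(R_k)$. Since every point of $R_k$ has each coordinate bounded in absolute value by $2^{k+1}r$, the annulus $R_k$ is contained in the cube $Q(0,2^{k+2}r)=2^{k+1}Q(0,2r)$, whence $w(R_k)\le w\big(2^{k+1}Q(0,2r)\big)$.

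Next, fixing $q_1$ with $1<q_1<q$ and $w\in A_{q_1}$, I would apply Lemma 2.1 with weight index $q_1$ and dilation factor $\lambda=2^{k+1}$ to get $w\big(2^{k+1}Q(0,2r)\big)\le C\,2^{(k+1)nq_1}w\big(Q(0,2r)\big)$. Combining the last three displays and summing over $k$ yields
\[
\int_{|x|\ge r}\frac{w(x)}{|x|^{nq}}\,dx\le C\,r^{-nq}w\big(Q(0,2r)\big)\sum_{k=0}^\infty 2^{kn(q_1-q)},
\]
where the factor $2^{nq_1}$ and the doubling constant have been absorbed into $C$. The geometric series converges exactly because $q_1<q$, which produces the asserted bound with a constant independent of $r$ (depending only on $n$, $q$, $q_1$, and the $A_{q_1}$ constant of $w$).

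The main obstacle, and the reason the argument is more than bookkeeping, is precisely this coincidence between the doubling exponent $nq$ available at the index $q$ and the decay exponent $nq$ of the kernel: the two cancel term by term, so no convergent estimate is possible using Lemma 2.1 at the index $q$ alone. Lowering the doubling index to a strictly smaller $q_1$ is what breaks the tie and forces summability; the remainder is the routine geometry of nested balls and cubes.
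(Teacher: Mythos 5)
Your argument is correct. Note that the paper itself gives no proof of this lemma --- it is quoted from Garc\'ia-Cuerva and Rubio de Francia [9] --- so there is no in-paper argument to compare against; what you have written is essentially the standard proof from that reference. You correctly identified the one non-trivial point: applying the doubling estimate of Lemma 2.1 at the index $q$ itself produces the factor $2^{knq}$, which exactly cancels the decay $(2^k r)^{-nq}$ of the kernel and leaves a divergent series, so one must first invoke the openness of the $A_q$ classes (recorded in Section 2.1, valid since $q>1$) to pass to some $A_{q_1}$ with $1<q_1<q$ and gain the convergent geometric factor $2^{kn(q_1-q)}$. The remaining steps --- the dyadic annuli, the inclusion $R_k\subseteq 2^{k+1}Q(0,2r)$, and the uniformity of the constant in $r$ --- are all handled correctly.
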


Given a weight function $w$ on $\mathbb R^n$, for $0<p<\infty$, we denote by $L^p_w(\mathbb R^n)$ the weighted space of all functions $f$ satisfying
\begin{equation*}
\|f\|_{L^p_w}=\bigg(\int_{\mathbb R^n}|f(x)|^pw(x)\,dx\bigg)^{1/p}<\infty.
\end{equation*}
When $p=\infty$, $L^\infty_w(\mathbb R^n)$ will be taken to mean $L^\infty(\mathbb R^n)$, and
\begin{equation*}
\|f\|_{L^\infty_w}=\|f\|_{L^\infty}=\underset{x\in\mathbb R^n}{\mbox{ess\,sup}}\,|f(x)|.
\end{equation*}
We also denote by $WL^p_w(\mathbb R^n)$ the weighted weak $L^p$ space which is formed by all measurable functions $f$ satisfying
\begin{equation*}
\|f\|_{WL^p_w}=\sup_{\lambda>0}\lambda\cdot w\big(\big\{x\in\mathbb R^n:|f(x)|>\lambda\big\}\big)^{1/p}<\infty.
\end{equation*}

\subsection{Weighted weak Hardy spaces}

Let us now turn to the weighted weak Hardy spaces. The (unweighted) weak $H^p$ spaces have first appeared in the work of Fefferman, Rivi\`ere and Sagher \cite{cfefferman}. The atomic decomposition theory of weak $H^1$ space on $\mathbb R^n$ was given by Fefferman and Soria in \cite{rfefferman}. Later, Liu \cite{liu} established the weak $H^p$ spaces on homogeneous
groups. For the boundedness properties of some operators on weak Hardy spaces, we refer the reader to [2--6] and [16]. In 2000, Quek and Yang \cite{quek} introduced the weighted weak Hardy spaces $WH^p_w(\mathbb R^n)$ and established their atomic decompositions. Moreover, by using the atomic decomposition theory of $WH^p_w(\mathbb R^n)$, Quek and Yang \cite{quek} also obtained the boundedness of Calder\'on-Zygmund type operators on these weighted spaces.

We write $\mathscr S(\mathbb R^n)$ to denote the Schwartz space of all rapidly decreasing smooth functions and $\mathscr S'(\mathbb R^n)$ to denote the space of all tempered distributions, i.e., the topological dual of $\mathscr S(\mathbb R^n)$. Let $w\in A_\infty$, $0<p\le1$ and $N=[n(q_w/p-1)]$. Define
\begin{equation*}
\mathscr A_{N,w}=\Big\{\varphi\in\mathscr S(\mathbb R^n):\sup_{x\in\mathbb R^n}\sup_{|\alpha|\le N+1}(1+|x|)^{N+n+1}\big|D^\alpha\varphi(x)\big|\le1\Big\},
\end{equation*}
where $\alpha=(\alpha_1,\dots,\alpha_n)\in(\mathbb N\cup\{0\})^n$, $|\alpha|=\alpha_1+\dots+\alpha_n$, and
\begin{equation*}
D^\alpha\varphi=\frac{\partial^{|\alpha|}\varphi}{\partial x^{\alpha_1}_1\cdots\partial x^{\alpha_n}_n}.
\end{equation*}
For $f\in\mathscr S'(\mathbb R^n)$, the grand maximal function of $f$ is defined by
\begin{equation*}
G_w f(x)=\sup_{\varphi\in\mathscr A_{N,w}}\sup_{|y-x|<t}\big|(\varphi_t*f)(y)\big|.
\end{equation*}
Then we can define the weighted weak Hardy space $WH^p_w(\mathbb R^n)$ by $WH^p_w(\mathbb R^n)=\big\{f\in\mathscr S'(\mathbb R^n):G_w f\in WL^p_w(\mathbb R^n)\big\}$. Moreover, we set $\|f\|_{WH^p_w}=\|G_w f\|_{WL^p_w}$.

\begin{theorem}[\cite{quek}]
Let $0<p\le1$ and $w\in A_\infty$. For every $f\in WH^p_w(\mathbb R^n)$, there exists a sequence of bounded measurable functions $\{f_k\}_{k=-\infty}^\infty$ such that

$(i)$ $f=\sum_{k=-\infty}^\infty f_k$ in the sense of distributions.

$(ii)$ Each $f_k$ can be further decomposed into $f_k=\sum_i b^k_i$, where $\{b^k_i\}$ satisfies

\quad $(a)$ Each $b^k_i$ is supported in a cube $Q^k_i$ with $\sum_{i}w(Q^k_i)\le c2^{-kp}$, and $\sum_i\chi_{Q^k_i}(x)\le c$. Here $\chi_E$ denotes the characteristic function of the set $E$ and $c\sim\big\|f\big\|_{WH^p_w}^p;$

\quad $(b)$ $\big\|b^k_i\big\|_{L^\infty}\le C2^k$, where $C>0$ is independent of $i$ and $k\,;$

\quad $(c)$ $\int_{\mathbb R^n}b^k_i(x)x^\alpha\,dx=0$ for every multi-index $\alpha$ with $|\alpha|\le[n({q_w}/p-1)]$.

Conversely, if $f\in\mathscr S'(\mathbb R^n)$ has a decomposition satisfying $(i)$ and $(ii)$, then $f\in WH^p_w(\mathbb R^n)$. Moreover, we have $\big\|f\big\|_{WH^p_w}^p\sim c.$
\end{theorem}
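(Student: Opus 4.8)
The plan is to follow the Calder\'on--Zygmund / grand-maximal-function machinery of Fefferman--Soria and Liu, adapted to the weighted setting. Since $f\in WH^p_w$ means $G_wf\in WL^p_w$, and $G_wf$ is lower semicontinuous, for each $k\in\mathbb Z$ the level set $\Omega_k=\{x:G_wf(x)>2^k\}$ is open, the $\Omega_k$ are nested and decreasing, and the weak-type bound gives $w(\Omega_k)\le C2^{-kp}\|f\|_{WH^p_w}^p$. First I would perform a Whitney decomposition of each $\Omega_k$ into cubes $\{Q^k_i\}_i$ with bounded overlap $\sum_i\chi_{Q^k_i}\le c$ and $\mathrm{diam}\,Q^k_i\approx\mathrm{dist}(Q^k_i,\Omega_k^c)$, and take a smooth partition of unity $\{\zeta^k_i\}$ subordinate to a fixed dilate of this cover. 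This already delivers condition $(a)$: summing the bounded overlap and invoking Lemma 2.1 (doubling) gives $\sum_i w(Q^k_i)\le C\,w(\Omega_k)\le c\,2^{-kp}$.

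Next I would carry out the Calder\'on--Zygmund decomposition of the distribution $f$ at each height $2^k$, writing $f=g^k+\sum_i\beta^k_i$, where $\beta^k_i=(f-P^k_i)\zeta^k_i$ and $P^k_i$ is the unique polynomial of degree $\le N=[n(q_w/p-1)]$, taken relative to the measure $\zeta^k_i\,dx$, that forces $\int\beta^k_i(x)x^\alpha\,dx=0$ for all $|\alpha|\le N$; this is condition $(c)$. The crucial analytic input is the uniform bound $\|g^k\|_{L^\infty}\le C2^k$ for the good part, which I would obtain by testing $f$ against admissible bumps: on $\Omega_k^c$ one has $G_wf\le2^k$ directly, while on each Whitney cube the polynomial $P^k_i$ is controlled by values of $\varphi_t*f$ at nearby points of $\Omega_k^c$, again bounded by $2^k$. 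Together with the size control on the bad parts, this shows $g^k\to0$ as $k\to-\infty$ and $g^k\to f$ as $k\to+\infty$ in $\mathscr S'$, so that $f=\sum_k(g^{k+1}-g^k)$ in the sense of distributions, giving $(i)$.

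I would then set $f_k=g^{k+1}-g^k$ and rewrite it through the bad parts as $f_k=\sum_i\beta^k_i-\sum_j\beta^{k+1}_j$. Using $\Omega_{k+1}\subseteq\Omega_k$, each level-$(k+1)$ piece is redistributed into the coarser level-$k$ Whitney cubes via the partition of unity, producing the final molecules $b^k_i$ of the statement: supported in $Q^k_i$, with vanishing moments preserved and with $\|b^k_i\|_{L^\infty}\le C2^k$ inherited from the two consecutive good-part bounds, which is condition $(b)$. For the converse I would fix $\lambda\in[2^{k_0},2^{k_0+1})$, split $f=\sum_{k\le k_0}f_k+\sum_{k>k_0}f_k$, and estimate $w(\{G_wf>\lambda\})$ termwise, using the standard atom/molecule estimates for $G_w$ together with Lemmas 2.1 and 2.2 and the cancellation of order $N$ to sum the weighted tails and recover $\|f\|_{WH^p_w}^p\sim c$.

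The hardest part, and the step specific to the \emph{weak} theory, is the bookkeeping in this telescoping step: redistributing the bad parts $\beta^{k+1}_j$ into the cubes $Q^k_i$ so that the resulting $b^k_i$ are simultaneously supported in $Q^k_i$, carry the full set of vanishing moments up to order $N$, and obey the uniform pointwise bound $\|b^k_i\|_{L^\infty}\le C2^k$, all while keeping the overlap controlled. Maintaining these three properties at once requires careful estimates on the polynomial corrections $P^k_i$ in the weighted $L^\infty$ sense, and it is here that the choice $N=[n(q_w/p-1)]$, matching the cancellation to the $A_p$-decay of $w$, together with the $A_\infty$ doubling from Lemma 2.1, is used most heavily.
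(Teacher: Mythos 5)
The paper does not actually prove this statement: Theorem 2.3 is imported verbatim from Quek and Yang \cite{quek}, so there is no internal proof to compare against. Your outline is, however, precisely the route that \cite{quek} takes (adapting Fefferman--Soria and Liu to the weighted setting): Whitney decomposition of the open level sets $\Omega_k=\{x: G_wf(x)>2^k\}$, a Calder\'on--Zygmund decomposition $f=g^k+\sum_i\beta^k_i$ at each height with polynomial corrections of degree $N=[n(q_w/p-1)]$, the key bound $\|g^k\|_{L^\infty}\le C2^k$, the telescoping $f_k=g^{k+1}-g^k$, and a termwise grand-maximal estimate for the converse. You also correctly locate where the weight enters: doubling for condition $(a)$, and the matching of $N$ to $q_w/p$ so that the weighted tails of $G_w(b^k_i)$ are summable.

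As it stands, though, this is an architecture rather than a proof, and the step you yourself flag as hardest is the genuine crux and is left undone. When you rewrite $f_k=\sum_i\beta^k_i-\sum_j\beta^{k+1}_j$ and fold the level-$(k+1)$ pieces into the coarser level-$k$ cubes, the naive regrouping destroys the vanishing moments; one must insert a second family of polynomial corrections (orthogonalizing $(f-P^{k+1}_j)\zeta^k_i$ against polynomials of degree $\le N$ relative to $\zeta^{k+1}_j\,dx$), prove these corrections are $O(2^{k+1})$ in $L^\infty$, and verify that their sum over $i$ telescopes away so that $\sum_i b^k_i$ still equals $f_k$. Two further points need justification rather than assertion: the convergence $g^k\to f$ in $\mathscr S'(\mathbb R^n)$ as $k\to+\infty$, which requires showing $\sum_i\beta^k_i\to0$ in $\mathscr S'(\mathbb R^n)$ and uses more than $w(\Omega_k)\to0$; and, in the converse direction, the fact that one cannot simply add weak-type bounds over $k$ --- the split at $\lambda\in[2^{k_0},2^{k_0+1})$ must be run with a strong $L^2_w$ estimate on the low-$k$ part and a pointwise tail estimate on the high-$k$ part, exactly the scheme the present paper executes for $\mathcal S_\alpha$ in Section 3. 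None of these gaps indicates a wrong approach; they mark the places where the sketch must be expanded into the actual argument of \cite{quek}.
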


Throughout this article $C$ always denote a positive constant independent of the main parameters involved, but it may be different from line to line.

\section{Proof of Theorem 1.1}

Before proving our main theorem in this section, let us first establish the following lemma.

\begin{lemma}
Let $0<\alpha\le1$. Then for any given function $b\in L^\infty(\mathbb R^n)$ with support contained in $Q=Q(x_0,r)$, and $\int_{\mathbb R^n}b(x)\,dx=0$,
we have
\begin{equation*}
\mathcal S_\alpha(b)(x)\le C\cdot\|b\|_{L^\infty}\frac{r^{n+\alpha}}{|x-x_0|^{n+\alpha}}, \quad \mbox{whenever}\; \;|x-x_0|>\sqrt{n}r.
\end{equation*}
\end{lemma}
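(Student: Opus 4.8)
The goal is a pointwise decay estimate for $\mathcal S_\alpha(b)(x)$ when $x$ is far from the supporting cube. The governing quantity is $A_\alpha(b)(y,t) = \sup_{\varphi\in\mathcal C_\alpha}|b*\varphi_t(y)|$, so the first move is to bound $|b*\varphi_t(y)|$ using the vanishing moment of $b$ together with the Hölder (Lipschitz-$\alpha$) condition on $\varphi$. Writing
\begin{equation*}
b*\varphi_t(y)=\int_{\mathbb R^n}b(z)\,\varphi_t(y-z)\,dz=\int_{Q}b(z)\big[\varphi_t(y-z)-\varphi_t(y-x_0)\big]\,dz,
\end{equation*}
where I have inserted $-\varphi_t(y-x_0)$ for free by the cancellation $\int b=0$. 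The smoothness bound $|\varphi(u)-\varphi(u')|\le|u-u'|^\alpha$ rescales to $|\varphi_t(v)-\varphi_t(v')|\le t^{-n}t^{-\alpha}|v-v'|^\alpha$, and since $z\in Q$ gives $|z-x_0|\le\sqrt n\,r$, this produces the estimate $A_\alpha(b)(y,t)\le C\,\|b\|_{L^\infty}\,t^{-n-\alpha}r^{n+\alpha}$.

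\textbf{Restricting the domain of integration.} The key structural point is that the support condition on $\varphi$ forces the integrand to vanish unless $(y,t)$ is close to the cube: since $\mathrm{supp}\,\varphi_t\subseteq\{|v|\le t\}$, the factor $\varphi_t(y-z)-\varphi_t(y-x_0)$ can be nonzero only when either $|y-z|\le t$ or $|y-x_0|\le t$ for some $z\in Q$, which (given $|x-x_0|>\sqrt n\,r$ and $(y,t)\in\Gamma(x)$, i.e. $|y-x|<t$) forces $t$ to be bounded below in terms of $|x-x_0|$. Concretely one checks that a nonzero contribution requires $t\gtrsim|x-x_0|$. I would therefore substitute the pointwise bound for $A_\alpha(b)$ into
\begin{equation*}
\mathcal S_\alpha(b)(x)^2=\iint_{\Gamma(x)}\big(A_\alpha(b)(y,t)\big)^2\frac{dy\,dt}{t^{n+1}}
\end{equation*}
and restrict the $(y,t)$-integral to the region $\{t\gtrsim|x-x_0|\}\cap\Gamma(x)$.

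\textbf{Carrying out the integral.} On that region the integrand is dominated by $C\,\|b\|_{L^\infty}^2\,r^{2(n+\alpha)}\,t^{-2(n+\alpha)}\cdot t^{-(n+1)}$. For fixed $t$, the slice $\{y:|y-x|<t\}$ of the cone has measure $\sim t^n$, so integrating in $y$ contributes a factor $t^n$, leaving $\int_{t\gtrsim|x-x_0|} t^{-2(n+\alpha)}\,t^{-(n+1)}\,t^{n}\,dt=\int_{t\gtrsim|x-x_0|}t^{-2(n+\alpha)-1}\,dt$. Since the exponent $-2(n+\alpha)-1<-1$, this converges and equals a constant times $|x-x_0|^{-2(n+\alpha)}$. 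Collecting the constants gives $\mathcal S_\alpha(b)(x)^2\le C\,\|b\|_{L^\infty}^2\,r^{2(n+\alpha)}\,|x-x_0|^{-2(n+\alpha)}$, and taking square roots yields the claimed bound.

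\textbf{The main obstacle.} The only genuinely delicate step is the domain restriction: verifying rigorously that the support of $\varphi_t$ (radius $t$ about $y$) together with the cone constraint $|y-x|<t$ and the separation $|x-x_0|>\sqrt n\,r$ forces $t\gtrsim|x-x_0|$, with the correct comparison constant. One has to track the triangle-inequality chain $|x-x_0|\le|x-y|+|y-z|+|z-x_0|<t+t+\sqrt n\,r$ and then absorb the $\sqrt n\,r$ term using $|x-x_0|>\sqrt n\,r$, so that the lower bound on $t$ comes out proportional to $|x-x_0|$. Everything after that is the routine radial integral above.
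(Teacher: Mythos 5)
Your proposal follows the same route as the paper's proof: kill the mean of $b$ against $\varphi_t$ via the vanishing moment and the H\"older condition to get $A_\alpha(b)(y,t)\le C\|b\|_{L^\infty}r^{n+\alpha}t^{-(n+\alpha)}$, use the support of $\varphi$ together with the cone condition to force $t\gtrsim|x-x_0|$, and then compute the resulting radial integral. The computation of the integral and the final exponent are correct.

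The one place where your write-up does not quite close is exactly the step you flag as the main obstacle. You bound $|z-x_0|\le\sqrt n\,r$ for $z\in Q$ and then try to absorb the $\sqrt n\,r$ term in $|x-x_0|<2t+\sqrt n\,r$ using only $|x-x_0|>\sqrt n\,r$. That yields $2t>|x-x_0|-\sqrt n\,r$, which is positive but \emph{not} proportional to $|x-x_0|$: if $|x-x_0|$ is only slightly larger than $\sqrt n\,r$, the lower bound on $t$ degenerates and the tail integral $\int_{t\gtrsim|x-x_0|-\sqrt n r}t^{-2(n+\alpha)-1}\,dt$ no longer gives the claimed $|x-x_0|^{-2(n+\alpha)}$. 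The fix is the one the paper uses and is the reason the hypothesis is calibrated as $|x-x_0|>\sqrt n\,r$: since $r$ is the \emph{side length} of $Q(x_0,r)$, the half-diagonal bound is $|z-x_0|\le\tfrac{\sqrt n}{2}r<\tfrac12|x-x_0|$, whence $|x-x_0|\le 2t+\tfrac12|x-x_0|$ and $t\ge\tfrac14|x-x_0|$. With that correction (which changes nothing else in your argument), your proof is complete and coincides with the paper's.
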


\begin{proof}
For any $\varphi\in{\mathcal C}_\alpha$, $0<\alpha\le1$, by the vanishing moment condition of $b$, we have that for any $(y,t)\in\Gamma(x)$,
\begin{align}
\big|(b*\varphi_t)(y)\big|&=\left|\int_Q\big(\varphi_t(y-z)-\varphi_t(y-x_0)\big)b(z)\,dz\right|\notag\\
&\le\int_Q\frac{|z-x_0|^\alpha}{t^{n+\alpha}}|b(z)|\,dz\notag\\
&\le C\cdot\|b\|_{L^\infty}\frac{r^{n+\alpha}}{t^{n+\alpha}}.
\end{align}
For any $z\in Q$, we have $|z-x_0|\le\frac{\sqrt n}{2}r<\frac{|x-x_0|}{2}$. Furthermore, we observe that $supp \,\varphi\subseteq\{x\in\mathbb R^n:|x|\le1\}$, then for any $(y,t)\in\Gamma(x)$, by a direct computation, we can easily see that
\begin{equation}
2t\ge|x-y|+|y-z|\ge|x-z|\ge|x-x_0|-|z-x_0|\ge\frac{|x-x_0|}{2}.
\end{equation}
Thus, for any point $x$ with $|x-x_0|>\sqrt{n}r$, it follows from the inequalities (1) and (2) that
\begin{equation*}
\begin{split}
\big|\mathcal S_\alpha(b)(x)\big|&=\left(\iint_{\Gamma(x)}\Big(\sup_{\varphi\in{\mathcal C}_\alpha}\big|(\varphi_t*b)(y)\big|\Big)^2\frac{dydt}{t^{n+1}}\right)^{1/2}\\
&\le C\cdot\|b\|_{L^\infty}r^{n+\alpha}\left(\int_{\frac{|x-x_0|}{4}}^\infty
\int_{|y-x|<t}\frac{dydt}{t^{2(n+\alpha)+n+1}}\right)^{1/2}\\
&\le C\cdot\|b\|_{L^\infty}r^{n+\alpha}\left(\int_{\frac{|x-x_0|}{4}}^\infty\frac{dt}{t^{2(n+\alpha)+1}}\right)^{1/2}\\
&\le C\cdot\|b\|_{L^\infty}\frac{r^{n+\alpha}}{|x-x_0|^{n+\alpha}}.
\end{split}
\end{equation*}
We are done.
\end{proof}

We are now in a position to give the proof of Theorem 1.1.

\begin{proof}[Proof of Theorem $1.1$]
For any given $\lambda>0$, we may choose $k_0\in\mathbb Z$ such that $2^{k_0}\le\lambda<2^{k_0+1}$. For every $f\in WH^p_w(\mathbb R^n)$, then by Theorem 2.3, we can write
\begin{equation*}
f=\sum_{k=-\infty}^\infty f_k=\sum_{k=-\infty}^{k_0} f_k+\sum_{k=k_0+1}^\infty f_k=F_1+F_2,
\end{equation*}
where $F_1=\sum_{k=-\infty}^{k_0} f_k=\sum_{k=-\infty}^{k_0}\sum_i b^k_i$, $F_2=\sum_{k=k_0+1}^\infty f_k=\sum_{k=k_0+1}^\infty\sum_i b^k_i$ and $\{b^k_i\}$ satisfies $(a)$--$(c)$ in Theorem 2.3. Then we have
\begin{equation*}
\begin{split}
&\lambda^p\cdot w\big(\big\{x\in\mathbb R^n:|\mathcal S_\alpha(f)(x)|>\lambda\big\}\big)\\
\le\,&\lambda^p\cdot w\big(\big\{x\in\mathbb R^n:|\mathcal S_\alpha(F_1)(x)|>\lambda/2\big\}\big)+\lambda^p\cdot w\big(\big\{x\in\mathbb R^n:|\mathcal S_\alpha(F_2)(x)|>\lambda/2\big\}\big)\\
=\,&I_1+I_2.
\end{split}
\end{equation*}
First we claim that the following inequality holds:
\begin{equation}
\big\|F_1\big\|_{L^2_w}\le C\cdot\lambda^{1-p/2}\big\|f\big\|^{p/2}_{WH^p_w}.
\end{equation}
In fact, since $supp\,b^k_i\subseteq Q^k_i=Q(x^k_i,r^k_i)$ and $\|b^k_i\|_{L^\infty}\le C 2^k$ by Theorem 2.3, then it follows from Minkowski's integral inequality that
\begin{equation*}
\begin{split}
\big\|F_1\big\|_{L^2_w}&\le\sum_{k=-\infty}^{k_0}\sum_i\big\|b^k_i\big\|_{L^2_w}\\
&\le\sum_{k=-\infty}^{k_0}\sum_i\big\|b^k_i\big\|_{L^\infty}w\big(Q^k_i\big)^{1/2}.
\end{split}
\end{equation*}
For each $k\in\mathbb Z$, by using the bounded overlapping property of the cubes $\{Q^k_i\}$ and the fact that $1-p/2>0$, we thus obtain
\begin{equation*}
\begin{split}
\big\|F_1\big\|_{L^2_w}&\le C\sum_{k=-\infty}^{k_0}2^k\Big(\sum_i w(Q^k_i)\Big)^{1/2}\\
&\le C\sum_{k=-\infty}^{k_0}2^{k(1-p/2)}\big\|f\big\|^{p/2}_{WH^p_w}\\
&\le C\sum_{k=-\infty}^{k_0}2^{(k-k_0)(1-p/2)}\cdot\lambda^{1-p/2}\big\|f\big\|^{p/2}_{WH^p_w}\\
&\le C\cdot\lambda^{1-p/2}\big\|f\big\|^{p/2}_{WH^p_w}.
\end{split}
\end{equation*}
Since $w\in A_{p(1+\frac{\alpha}{n})}$ and $1<p(1+\frac{\alpha}{n})\le1+\frac{\alpha}{n}\le2$, then $w\in A_2$. Hence, it follows from Chebyshev's inequality and Theorem A that
\begin{equation*}
\begin{split}
I_1&\le \lambda^p\cdot\frac{4}{\lambda^2}\big\|\mathcal S_\alpha(F_1)\big\|^2_{L^2_w}\\
&\le C\cdot\lambda^{p-2}\big\|F_1\big\|^2_{L^2_w}\\
&\le C\big\|f\big\|^{p}_{WH^p_w}.
\end{split}
\end{equation*}
Now we turn our attention to the estimate of $I_2$. If we set
\begin{equation*}
A_{k_0}=\bigcup_{k=k_0+1}^\infty\bigcup_i \widetilde{Q^k_i},
\end{equation*}
where $\widetilde{Q^k_i}=Q(x^k_i,\tau^{{(k-k_0)}/{(n+\alpha)}}(2\sqrt n)r^k_i)$ and $\tau$ is a fixed positive number such that $1<\tau<2$. Thus, we can further decompose $I_2$ as
\begin{equation*}
\begin{split}
I_2&\le\lambda^p\cdot w\big(\big\{x\in A_{k_0}:|\mathcal S_\alpha(F_2)(x)|>\lambda/2\big\}\big)+
\lambda^p\cdot w\big(\big\{x\in (A_{k_0})^c:|\mathcal S_\alpha(F_2)(x)|>\lambda/2\big\}\big)\\
&=I'_2+I''_2.
\end{split}
\end{equation*}
Since $w\in A_{p(1+\frac{\alpha}{n})}$, then by Lemma 2.1, we can get
\begin{equation*}
\begin{split}
I'_2&\le\lambda^p\sum_{k=k_0+1}^\infty\sum_iw\big(\widetilde{Q^k_i}\big)\\
&\le C\cdot\lambda^p\sum_{k=k_0+1}^\infty\tau^{(k-k_0)p}\sum_iw(Q^k_i)\\
&\le C\big\|f\big\|^{p}_{WH^p_w}\sum_{k=k_0+1}^\infty\Big(\frac{\tau}{2}\Big)^{(k-k_0)p}\\
&\le C\big\|f\big\|^{p}_{WH^p_w}.
\end{split}
\end{equation*}
On the other hand, an application of Chebyshev's inequality gives us that
\begin{equation*}
\begin{split}
I''_2&\le 2^p\int_{(A_{k_0})^c}\big|\mathcal S_\alpha(F_2)(x)\big|^pw(x)\,dx\\
&\le 2^p\sum_{k=k_0+1}^\infty\sum_i\int_{\big(\widetilde{Q^k_i}\big)^c}\big|\mathcal S_\alpha(b^k_i)(x)\big|^pw(x)\,dx.
\end{split}
\end{equation*}
When $x\in\big(\widetilde{Q^k_i}\big)^c$, then a direct calculation shows that $|x-x^k_i|\ge\tau^{{(k-k_0)}/{(n+\alpha)}}\sqrt n r^k_i>\sqrt n r^k_i$. Let $q=p(1+\frac{\alpha}{n})$ for simplicity. Then for any $n/{(n+\alpha)}<p\le1$, $w\in A_q$ and $q>1$, we can see that $[n(q_w/p-1)]=0$. Obviously, by Theorem 2.3, we know that all the functions $b^k_i$ satisfy the conditions in Lemma 3.1. Applying Lemma 2.2 and Lemma 3.1, we can deduce
\begin{equation*}
\begin{split}
I''_2&\le C\sum_{k=k_0+1}^\infty\sum_i2^{kp}(r^k_i)^{(n+\alpha)p}
\int_{|x-x^k_i|\ge\tau^{{(k-k_0)}/{(n+\alpha)}}\sqrt n r^k_i}\frac{w(x)}{|x-x^k_i|^{(n+\alpha)p}}\,dx\\
&= C\sum_{k=k_0+1}^\infty\sum_i2^{kp}(r^k_i)^{nq}
\int_{|y|\ge\tau^{{(k-k_0)}/{(n+\alpha)}}\sqrt n r^k_i}\frac{w_1(y)}{|y|^{nq}}\,dy\\
&\le C\sum_{k=k_0+1}^\infty\sum_i
2^{kp}\big(\tau^{{(k-k_0)}/{(n+\alpha)}}\big)^{-nq}w_1\big(Q(0,\tau^{{(k-k_0)}/{(n+\alpha)}}\cdot r^k_i)\big)\\
&= C\sum_{k=k_0+1}^\infty\sum_i
2^{kp}\big(\tau^{{(k-k_0)}/{(n+\alpha)}}\big)^{-nq}w\big(Q(x^k_i,\tau^{{(k-k_0)}/{(n+\alpha)}}\cdot r^k_i)\big),
\end{split}
\end{equation*}
where $w_1(x)=w(x+x^k_i)$ is the translation of $w(x)$. It is obvious that $w_1\in A_{q}$ whenever $w\in A_{q}$, and $q_{w_1}=q_w$. In addition, for $w\in A_q$ with $q>1$, then we can take a sufficiently small number $\varepsilon>0$ such that $w\in A_{q-\varepsilon}$. Therefore, by using Lemma 2.1 again, we obtain
\begin{equation*}
\begin{split}
I''_2&\le C\sum_{k=k_0+1}^\infty\sum_i2^{kp}\big(\tau^{{(k-k_0)}/{(n+\alpha)}}\big)^{-n\varepsilon}w(Q^k_i)\\
&\le C\big\|f\big\|^{p}_{WH^p_w}\sum_{k=k_0+1}^\infty\big(\tau^{{(k-k_0)}/{(n+\alpha)}}\big)^{-n\varepsilon}\\
&\le C\big\|f\big\|^{p}_{WH^p_w}.
\end{split}
\end{equation*}
Summing up the above estimates for $I_1$ and $I_2$ and then taking the supremum over all $\lambda>0$, we complete the proof of Theorem 1.1.
\end{proof}

\section{Proof of Theorem 1.2}

In order to prove Theorem 1.2, we shall need the following two lemmas.

\begin{lemma}
Let $0<\alpha\le1$, $n/{(n+\alpha)}<p\le1$ and $w\in A_{p(1+\frac{\alpha}{n})}$. Then for every $\lambda>p(1+\frac{\alpha}{n})$, we have
\begin{equation*}
\big\|g^*_{\lambda,\alpha}(f)\big\|_{L^2_w}\le C\|f\|_{L^2_w}
\end{equation*}
holds for all $f\in L^2_w(\mathbb R^n)$.
\end{lemma}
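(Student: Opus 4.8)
The plan is to dominate $g^*_{\lambda,\alpha}(f)$ pointwise by a rapidly convergent superposition of varying-aperture intrinsic square functions $\mathcal S_{\alpha,2^j}(f)$, and then to play the aperture growth off against the decay supplied by the factor $\big(t/(t+|x-y|)\big)^{\lambda n}$ by means of a weighted change-of-aperture inequality. As already noted in the proof of Theorem 1.1, the hypotheses $w\in A_{p(1+\frac{\alpha}{n})}$ and $1<p(1+\frac{\alpha}{n})\le2$ give $w\in A_2$, so Theorem A applies with $p=2$ and yields $\|\mathcal S_\alpha(f)\|_{L^2_w}\le C\|f\|_{L^2_w}$. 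The whole task is therefore to reduce the $g^*_{\lambda,\alpha}$ estimate to this one.

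First I would slice the half-space ${\mathbb R}^{n+1}_+$, for each fixed $x$, into the cone $\Gamma(x)=\{(y,t):|x-y|<t\}$ together with the dyadic annular regions $E_j(x)=\{(y,t):2^{j-1}t\le|x-y|<2^jt\}$, $j\ge1$, which partition ${\mathbb R}^{n+1}_+$. On $\Gamma(x)$ one has $t/(t+|x-y|)\le1$, while on $E_j(x)$ one has $t/(t+|x-y|)\le2^{1-j}$ and, since $|x-y|<2^jt$ there, $E_j(x)\subseteq\Gamma_{2^j}(x)$. Inserting these bounds into the defining integral and recalling that $\mathcal S_{\alpha,1}=\mathcal S_\alpha$ produces the pointwise estimate
\begin{equation*}
g^*_{\lambda,\alpha}(f)(x)^2\le\mathcal S_\alpha(f)(x)^2+\sum_{j=1}^\infty 2^{(1-j)\lambda n}\,\mathcal S_{\alpha,2^j}(f)(x)^2.
\end{equation*}
Integrating against $w\,dx$ then reduces matters to controlling $\|\mathcal S_{\alpha,2^j}(f)\|_{L^2_w}$ in terms of $\|\mathcal S_\alpha(f)\|_{L^2_w}$ with explicit dependence on the aperture.

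The key step is a weighted change-of-aperture inequality: for $\beta\ge1$ and $w\in A_q$, I claim $\big\|\mathcal S_{\alpha,\beta}(f)\big\|_{L^2_w}^2\le C\beta^{nq}\big\|\mathcal S_\alpha(f)\big\|_{L^2_w}^2$. I would prove this by Fubini's theorem, writing
\begin{equation*}
\big\|\mathcal S_{\alpha,\beta}(f)\big\|_{L^2_w}^2=\iint_{{\mathbb R}^{n+1}_+}\big(A_\alpha(f)(y,t)\big)^2\,w\big(\{x:|x-y|<\beta t\}\big)\,\frac{dy\,dt}{t^{n+1}},
\end{equation*}
together with the same identity for $\beta=1$; the ratio of the two inner weighted measures is controlled by the doubling bound $w(\{x:|x-y|<\beta t\})\le C\beta^{nq}\,w(\{x:|x-y|<t\})$ furnished by Lemma 2.1 (balls being comparable to cubes). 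Taking $q=p(1+\frac{\alpha}{n})$, so that $w\in A_q$, and $\beta=2^j$, the pointwise estimate integrates to
\begin{equation*}
\big\|g^*_{\lambda,\alpha}(f)\big\|_{L^2_w}^2\le\Big(1+C\sum_{j=1}^\infty 2^{(1-j)\lambda n}\,2^{jnq}\Big)\big\|\mathcal S_\alpha(f)\big\|_{L^2_w}^2,
\end{equation*}
and the geometric series, whose ratio in $j$ is $2^{n(q-\lambda)}$, converges precisely when $\lambda>q=p(1+\frac{\alpha}{n})$. One application of Theorem A then finishes the argument.

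The main obstacle is matching the aperture-growth exponent to the decay exponent \emph{sharply}. The doubling exponent $nq$ supplied by Lemma 2.1 must be strictly less than $\lambda n$ for the sum over $j$ to converge, and this is exactly what forces the threshold $\lambda>p(1+\frac{\alpha}{n})$. The delicate point is to invoke the doubling constant attached to the correct index $q=p(1+\frac{\alpha}{n})$ rather than a crude $A_2$ bound: the latter would only give $\lambda>2$, whereas the refined exponent aligns the convergence condition exactly with the stated range of $\lambda$.
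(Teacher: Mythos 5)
Your proposal is correct and follows essentially the same route as the paper: the same dyadic decomposition of ${\mathbb R}^{n+1}_+$ into $\Gamma(x)$ and the annular regions contained in $\Gamma_{2^j}(x)$, the same Fubini-plus-doubling change-of-aperture estimate with exponent $nq=(n+\alpha)p$ from Lemma 2.1, and the same final appeal to Theorem A with $w\in A_2$. Your closing remark about using the doubling constant at the index $q=p(1+\frac{\alpha}{n})$ rather than at $q=2$ is exactly the point that makes the convergence condition match the stated threshold $\lambda>p(1+\frac{\alpha}{n})$.
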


\begin{proof}
From the definition, we readily see that
\begin{align}
\big(g^*_{\lambda,\alpha}(f)(x)\big)^2=&\iint_{\mathbb R^{n+1}_+}\left(\frac{t}{t+|x-y|}\right)^{\lambda n}\Big(A_\alpha(f)(y,t)\Big)^2\frac{dydt}{t^{n+1}}\notag\\
=&\int_0^\infty\int_{|x-y|<t}\left(\frac{t}{t+|x-y|}\right)^{\lambda n}\Big(A_\alpha(f)(y,t)\Big)^2\frac{dydt}{t^{n+1}}\notag\\
&+\sum_{j=1}^\infty\int_0^\infty\int_{2^{j-1}t\le|x-y|<2^jt}\left(\frac{t}{t+|x-y|}\right)^{\lambda n}\Big(A_\alpha(f)(y,t)\Big)^2\frac{dydt}{t^{n+1}}\notag\\
\le&\, C\bigg[\mathcal S_\alpha(f)(x)^2+\sum_{j=1}^\infty 2^{-j\lambda n}\mathcal S_{\alpha,2^j}(f)(x)^2\bigg].
\end{align}
We are now going to estimate $\int_{\mathbb R^n}|\mathcal S_{\alpha,2^j}(f)(x)|^2w(x)\,dx$ for $j=1,2,\ldots.$
Fubini's theorem and Lemma 2.1 imply
\begin{align}
\int_{\mathbb R^n}\big|\mathcal S_{\alpha,2^j}(f)(x)\big|^2w(x)\,dx&=\iint_{{\mathbb R}^{n+1}_+}\bigg(\int_{|x-y|<2^j t}w(x)\,dx\bigg)\Big(A_\alpha(f)(y,t)\Big)^2\frac{dydt}{t^{n+1}}\notag\\
&\le C\cdot2^{j(n+\alpha)p}\iint_{{\mathbb R}^{n+1}_+}\bigg(\int_{|x-y|<t}w(x)\,dx\bigg)\Big(A_\alpha(f)(y,t)\Big)^2\frac{dydt}{t^{n+1}}\notag\\
&=C\cdot 2^{j(n+\alpha)p}\big\|\mathcal S_\alpha(f)\big\|^2_{L^2_w}.
\end{align}
Since $w\in A_{p(1+\frac{\alpha}{n})}$ and $1<p(1+\frac{\alpha}{n})\le2$, then we have $w\in A_2$. Therefore, under the assumption that $\lambda>p(1+\frac{\alpha}{n})$, it follows from Theorem A and the above inequalities (4) and (5) that
\begin{equation*}
\begin{split}
\big\|g^*_{\lambda,\alpha}(f)\big\|^2_{L^2_w}\le& \,C\bigg(\int_{\mathbb R^n}\big|\mathcal S_\alpha(f)(x)\big|^2w(x)\,dx+\sum_{j=1}^\infty 2^{-j\lambda n}\int_{\mathbb R^n}\big|\mathcal S_{\alpha,2^j}(f)(x)\big|^2w(x)\,dx\bigg)\\
\le& \,C\bigg(\big\|\mathcal S_\alpha(f)\big\|^2_{L^2_w}+\sum_{j=1}^\infty 2^{-j\lambda n}\cdot2^{j(n+\alpha)p}\big\|\mathcal S_\alpha(f)\big\|_{L^2_w}^2\bigg)\\
\le& \,C\cdot\big\|f\big\|^2_{L^2_w}\Big(1+\sum_{j=1}^\infty 2^{-j\lambda n}\cdot2^{j(n+\alpha)p}\Big)\\
\le& \,C\cdot\big\|f\big\|^2_{L^2_w}.
\end{split}
\end{equation*}
We are done.
\end{proof}

Following the same procedure as that of Lemma 3.1, we can also show

\begin{lemma}
Let $0<\alpha\le1$ and $j\in \mathbb Z_+$. Then for any given function $b\in L^\infty(\mathbb R^n)$ with support contained in $Q=Q(x_0,r)$, and $\int_{\mathbb R^n}b(x)\,dx=0$,
we have
\begin{equation*}
\mathcal S_{\alpha,2^j}(b)(x)\le C\cdot2^{j(3n+2\alpha)/2}\|b\|_{L^\infty}\frac{r^{n+\alpha}}{|x-x_0|^{n+\alpha}}, \quad \mbox{whenever}\; \;|x-x_0|>\sqrt{n}r.
\end{equation*}
\end{lemma}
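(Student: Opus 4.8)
The plan is to repeat the argument of Lemma 3.1 essentially verbatim, the only structural change being that the cone $\Gamma(x)$ is replaced throughout by the wider cone $\Gamma_{2^j}(x)=\{(y,t):|x-y|<2^jt\}$. The entire content then reduces to tracking carefully the two places where the aperture $2^j$ enters and checking that they combine to the claimed power $2^{j(3n+2\alpha)/2}$.

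First I would observe that the pointwise bound on $\big|(b*\varphi_t)(y)\big|$ is insensitive to the choice of cone. Exactly as in inequality (1), the vanishing moment of $b$ together with the Hölder condition $|\varphi(x)-\varphi(x')|\le|x-x'|^\alpha$ and the $t^{-n}$ normalization of $\varphi_t$ yield, for every $(y,t)\in{\mathbb R}^{n+1}_+$ and every $\varphi\in{\mathcal C}_\alpha$,
\[
\big|(b*\varphi_t)(y)\big|\le C\cdot\|b\|_{L^\infty}\frac{r^{n+\alpha}}{t^{n+\alpha}}.
\]
Next comes the geometric restriction on $t$, which is the analog of inequality (2). Using $\operatorname{supp}\varphi\subseteq\{|x|\le1\}$, so that $\varphi_t(y-z)\ne0$ forces $|y-z|\le t$, together with $(y,t)\in\Gamma_{2^j}(x)$, which gives $|x-y|<2^jt$, I get $|x-z|\le|x-y|+|y-z|<(2^j+1)t\le 2^{j+1}t$. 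Since $|x-x_0|>\sqrt n\,r$ forces $|z-x_0|\le\frac{\sqrt n}{2}r<\frac{|x-x_0|}{2}$, I obtain $|x-z|\ge\frac{|x-x_0|}{2}$ and hence $t>|x-x_0|/2^{j+2}$. Thus the lower cutoff on $t$ is now proportional to $2^{-j}$ instead of the constant $1/4$ appearing in Lemma 3.1.

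Substituting the two bounds into the definition of $\mathcal S_{\alpha,2^j}(b)$, the inner integral over $\{y:|y-x|<2^jt\}$ has measure comparable to $(2^jt)^n=2^{jn}t^n$, which is the first source of a power of $2^j$, while the shifted lower endpoint $|x-x_0|/2^{j+2}$ of the $t$-integral is the second. This gives
\[
\mathcal S_{\alpha,2^j}(b)(x)\le C\cdot\|b\|_{L^\infty}r^{n+\alpha}\,2^{jn/2}\left(\int_{|x-x_0|/2^{j+2}}^\infty\frac{dt}{t^{2(n+\alpha)+1}}\right)^{1/2}.
\]
Evaluating the elementary integral produces a factor $2^{(j+2)(n+\alpha)}|x-x_0|^{-(n+\alpha)}$ after taking the square root, and the two contributions multiply to $2^{jn/2}\cdot 2^{j(n+\alpha)}=2^{j(3n+2\alpha)/2}$, yielding precisely the asserted estimate. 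There is no genuine obstacle here; the only point requiring care is the bookkeeping of these two powers of $2^j$—one from the cross-sectional volume of the cone, one from the shifted lower limit of integration—and the verification that $n/2+(n+\alpha)=(3n+2\alpha)/2$.
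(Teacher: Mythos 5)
Your proposal is correct and follows essentially the same route as the paper's own proof: the aperture-independent pointwise bound (1), the modified geometric inequality giving $t\ge|x-x_0|/2^{j+2}$ on the wider cone, and the two factors $2^{jn/2}$ (from the cross-section $|y-x|<2^jt$) and $2^{j(n+\alpha)}$ (from the shifted lower limit) combining to $2^{j(3n+2\alpha)/2}$. The bookkeeping checks out, so there is nothing to add.
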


\begin{proof}
For any $z\in Q(x_0,r)$, we have $|z-x_0|<\frac{|x-x_0|}{2}$. Then for all $(y,t)\in\Gamma_{2^j}(x)$ and $|z-y|\le t$ with $z\in Q$, as in the proof of Lemma 3.1, we can deduce that
\begin{equation}
t+2^jt\ge|x-y|+|y-z|\ge|x-z|\ge|x-x_0|-|z-x_0|\ge\frac{|x-x_0|}{2}.
\end{equation}
Thus, by using the inequalities (1) and (6), we obtain
\begin{equation*}
\begin{split}
\big|\mathcal S_{\alpha,2^j}(b)(x)\big|&=\left(\iint_{\Gamma_{2^j}(x)}\Big(\sup_{\varphi\in{\mathcal C}_\alpha}\big|(\varphi_t*b)(y)\big|\Big)^2\frac{dydt}{t^{n+1}}\right)^{1/2}\\
&\le C\cdot\|b\|_{L^\infty}r^{n+\alpha}\left(\int_{\frac{|x-x_0|}{2^{j+2}}}^\infty
\int_{|y-x|<2^jt}\frac{dydt}{t^{2(n+\alpha)+n+1}}\right)^{1/2}\\
&\le C\cdot2^{{jn}/2}\|b\|_{L^\infty}r^{n+\alpha}\left(\int_{\frac{|x-x_0|}{2^{j+2}}}^\infty\frac{dt}{t^{2(n+\alpha)+1}}\right)^{1/2}\\
&\le C\cdot2^{j(3n+2\alpha)/2}\|b\|_{L^\infty}\frac{r^{n+\alpha}}{|x-x_0|^{n+\alpha}}.
\end{split}
\end{equation*}
This finishes the proof of the lemma.
\end{proof}

We are ready to show our main theorem of this section.

\begin{proof}[Proof of Theorem $1.2$]
We follow the strategy of the proof of Theorem 1.1. For any given $\sigma>0$, we are able to choose $k_0\in\mathbb Z$ such that $2^{k_0}\le\sigma<2^{k_0+1}$. For every $f\in WH^p_w(\mathbb R^n)$, we can also write
\begin{equation*}
\begin{split}
&\sigma^p\cdot w\big(\big\{x\in\mathbb R^n:|g^*_{\lambda,\alpha}(f)(x)|>\sigma\big\}\big)\\
\le\,&\sigma^p\cdot w\big(\big\{x\in\mathbb R^n:|g^*_{\lambda,\alpha}(F_1)(x)|>\sigma/2\big\}\big)
+\sigma^p\cdot w\big(\big\{x\in\mathbb R^n:|g^*_{\lambda,\alpha}(F_2)(x)|>\sigma/2\big\}\big)\\
=\,&J_1+J_2,
\end{split}
\end{equation*}
where the notations $F_1$ and $F_2$ are the same as in the proof of Theorem 1.1. By our assumption, we know that $\lambda>{(3n+2\alpha)}/n>p(1+\frac{\alpha}{n})$. Applying Chebyshev's inequality, Lemma 4.1 and the previous inequality (3), we obtain
\begin{equation*}
\begin{split}
J_1&\le \sigma^p\cdot\frac{4}{\sigma^2}\big\|g^*_{\lambda,\alpha}(F_1)\big\|^2_{L^2_w}\\
&\le C\cdot\sigma^{p-2}\big\|F_1\big\|^2_{L^2_w}\\
&\le C\|f\|^{p}_{WH^p_w}.
\end{split}
\end{equation*}
To estimate the other term $J_2$, as before, we also set
\begin{equation*}
A_{k_0}=\bigcup_{k=k_0+1}^\infty\bigcup_i \widetilde{Q^k_i},
\end{equation*}
where $\widetilde{Q^k_i}=Q(x^k_i,\tau^{{(k-k_0)}/{(n+\alpha)}}(2\sqrt n)r^k_i)$, $\tau$ is also a fixed real number such that $1<\tau<2$ and $supp\,b^k_i\subseteq Q^k_i=Q(x^k_i,r^k_i)$. Again, we shall further decompose $J_2$ as
\begin{equation*}
\begin{split}
J_2&\le\sigma^p\cdot w\big(\big\{x\in A_{k_0}:|g^*_{\lambda,\alpha}(F_2)(x)|>\sigma/2\big\}\big)+
\sigma^p\cdot w\big(\big\{x\in (A_{k_0})^c:|g^*_{\lambda,\alpha}(F_2)(x)|>\sigma/2\big\}\big)\\
&=J'_2+J''_2.
\end{split}
\end{equation*}
Using the same arguments as that of Theorem 1.1, we can see that
\begin{equation*}
\begin{split}
J'_2&\le\sigma^p\sum_{k=k_0+1}^\infty\sum_iw\big(\widetilde{Q^k_i}\big)\\
&\le C\cdot\sigma^p\sum_{k=k_0+1}^\infty\tau^{(k-k_0)p}\sum_iw(Q^k_i)\\
&\le C\big\|f\big\|^{p}_{WH^p_w}.
\end{split}
\end{equation*}
Noting that $0<p\le1$. Then by Chebyshev's inequality and the inequality (4), we have
\begin{equation*}
\begin{split}
J''_2\le&\, 2^p\int_{(A_{k_0})^c}\big|g^*_{\lambda,\alpha}(F_2)(x)\big|^pw(x)\,dx\\
\le&\, 2^p\sum_{k=k_0+1}^\infty\sum_i\int_{\big(\widetilde{Q^k_i}\big)^c}\big|S_\alpha(b^k_i)(x)\big|^pw(x)\,dx\\
&+2^p\sum_{j=1}^\infty 2^{-j\lambda np/2}\sum_{k=k_0+1}^\infty\sum_i\int_{\big(\widetilde{Q^k_i}\big)^c}\big|S_{\alpha,2^j}(b^k_i)(x)\big|^pw(x)\,dx\\
=&\,K_0+\sum_{j=1}^\infty 2^{-j\lambda np/2}K_j.
\end{split}
\end{equation*}
Note that $[n(q_w/p-1)]=0$ by the hypothesis. Clearly, in view of Theorem 2.3, we can easily see that all the functions $b^k_i$ satisfy the conditions in Lemma 3.1 or Lemma 4.2. In the proof of Theorem 1.1, we have already showed that
\begin{equation*}
K_0\le C\big\|f\big\|^{p}_{WH^p_w}.
\end{equation*}
Below we shall give the
estimates of $K_j$ for every $j=1,2,\ldots$. Observe that for any $x\in\big(\widetilde{Q^k_i}\big)^c$, we have $|x-x^k_i|\ge\tau^{{(k-k_0)}/{(n+\alpha)}}\sqrt n r^k_i>\sqrt n r^k_i$. Since $\|b^k_i\|_{L^\infty}\le C 2^k$, then, using Lemma 4.2 and following the same lines as in Theorem 1.1, we can also deduce
\begin{equation*}
\begin{split}
K_j&\le C\cdot2^{j{(3n+2\alpha)}p/2}\sum_{k=k_0+1}^\infty\sum_i\big\|b^k_i\big\|^p_{L^\infty}(r^k_i)^{(n+\alpha)p}
\int_{|x-x^k_i|\ge\tau^{{(k-k_0)}/{(n+\alpha)}}\sqrt n r^k_i}\frac{w(x)}{|x-x^k_i|^{(n+\alpha)p}}\,dx\\
&\le C\cdot2^{j{(3n+2\alpha)}p/2}\sum_{k=k_0+1}^\infty\sum_i2^{kp}(r^k_i)^{(n+\alpha)p}
\int_{|x-x^k_i|\ge\tau^{{(k-k_0)}/{(n+\alpha)}}\sqrt n r^k_i}\frac{w(x)}{|x-x^k_i|^{(n+\alpha)p}}\,dx\\
&\le C\cdot2^{j{(3n+2\alpha)}p/2}\big\|f\big\|^{p}_{WH^p_w}.
\end{split}
\end{equation*}
Hence, we finally obtain
\begin{equation*}
\begin{split}
J''_2&\le C\big\|f\big\|^{p}_{WH^p_w}\bigg(1+\sum_{j=1}^\infty2^{-j\lambda np/2}\cdot2^{j(3n+2\alpha)p/2}\bigg)\\
&\le C\big\|f\big\|^{p}_{WH^p_w},
\end{split}
\end{equation*}
where the last series is convergent since $\lambda>{(3n+2\alpha)}/n$. Therefore, combining the above estimates for $J_1$ and $J_2$ and then taking the supremum over all $\sigma>0$, we conclude the proof of Theorem 1.2.
\end{proof}


\begin{thebibliography}{99}

\bibitem{coifman} R. R. Coifman and C. Fefferman, Weighted norm inequalities for maximal functions and singular integrals, Studia Math, \textbf{51}(1974), 241--250.
\bibitem{ding1} Y. Ding, S. Z. Lu and S. L. Shao, Integral operators with variable kernels on weak Hardy spaces, J. Math. Anal. Appl, \textbf{317}(2006), 127--135.
\bibitem{ding2} Y. Ding, S. Z. Lu and Q. Y. Xue, Marcinkiewicz integral on Hardy spaces, Integr. Equ. Oper. Theory, \textbf{42}(2002), 174--182.
\bibitem{ding3} Y. Ding, S. Z. Lu and Q. Y. Xue, Parametrized Littlewood-Paley operators on Hardy and weak Hardy spaces, Math. Nachr, \textbf{280}(2007), 351--363.
\bibitem{ding4} Y. Ding, S. Z. Lu and Q. Y. Xue, Parametrized area integrals on Hardy spaces and weak Hardy spaces, Acta Math. Sinica, \textbf{23}(2007), 1537--1552.
\bibitem{ding5} Y. Ding and X. F. Wu, Weak Hardy space and endpoint estimates for singular integrals on space of homogeneous type, Turkish J. Math, \textbf{34}(2010), 235--247.
\bibitem{cfefferman} C. Fefferman, N. Rivi\`ere and Y. Sagher, Interpolation between $H^p$ spaces: The real method, Trans. Amer. Math. Soc, \textbf{191}(1974), 75--81.
\bibitem{rfefferman} R. Fefferman and F. Soria, The space weak $H^1$, Studia Math, \textbf{85}(1987), 1--16.
\bibitem{garcia2} J. Garcia-Cuerva and J. Rubio de Francia, Weighted Norm Inequalities and Related Topics, North-Holland, Amsterdam, 1985.
\bibitem{huang} J. Z. Huang and Y. Liu, Some characterizations of weighted Hardy spaces, J. Math. Anal. Appl, \textbf{363}(2010), 121--127.
\bibitem{hunt} R. A. Hunt, B. Muckenhoupt and R. L. Wheeden, Weighted norm inequalities for the conjugate function and Hilbert transform, Trans. Amer. Math. Soc, \textbf{176}(1973), 227--251.
\bibitem{lerner} A. K. Lerner, Sharp weighted norm inequalities for Littlewood-Paley operators and singular integrals, Adv. Math., \textbf{226}(2011), 3912--3926.
\bibitem{liu} H. P. Liu, The weak $H^p$ spaces on homogeneous groups, Lecture Notes in Math, Vol. 1494, Springer-Verlag, 1991, 113--118.
\bibitem{muckenhoupt} B. Muckenhoupt, Weighted norm inequalities for the Hardy maximal function, Trans. Amer. Math. Soc, \textbf{165}(1972), 207--226.
\bibitem{quek} T. S. Quek and D. C. Yang, Calder\'on-Zygmund-type operators on weighted weak Hardy spaces over $\mathbb R^n$, Acta Math. Sinica, \textbf{16}(2000), 141--160.
\bibitem{tao} X. X. Tao, X. Yu and S. Y. Zhang, Marcinkiewicz integrals with variable kernels on Hardy and weak Hardy spaces, J. Funct. Spaces Appl, \textbf{8}(2010), 1--16.
\bibitem{wang1} H. Wang and H. P. Liu, The intrinsic square function characterizations of weighted Hardy spaces, Illinois J. Math, to appear.
\bibitem{wang2} H. Wang and H. P. Liu, Weak type estimates of intrinsic square functions on the weighted Hardy spaces, Arch. Math., \textbf{97}(2011), 49--59.
\bibitem{wilson1} M. Wilson, The intrinsic square function, Rev. Mat. Iberoamericana, \textbf{23}(2007), 771--791.
\bibitem{wilson2} M. Wilson, Weighted Littlewood-Paley Theory and Exponential-Square Integrability, Lecture Notes in Math, Vol 1924, Springer-Verlag, 2007.

\end{thebibliography}
\end{document}